\title[H${}^2$ Spaces of Non-Commutative Functions] {H${}^2$ Spaces of Non-Commutative Functions}
\author{Mihai Popa and Victor Vinnikov}
\address{Department of Mathematics, University of Texas at San Antonio, One UTSA Circle
San Antonio, Texas 78249, USA,}
\address{
Institute of Mathematics `Simion Stoilow' of the Romanian Academy, P.O. Box 1-764,
Bucharest, RO-70700, Romania}
\email{mihai.popa@utsa.edu}
\address{Department of Mathematics, Ben Gurion University of Negev, Be'er Sheva
84105, Israel}
\email{vinnikov@cs.bgu.ac.il}
\newtheorem{claim}{}[section]
\newtheorem{defn}[claim]{Definition}
\newtheorem{thm}[claim]{Theorem}
\newtheorem{remark}[claim]{Remark}
\newtheorem{prop}[claim]{Proposition}
\newtheorem{cor}[claim]{Corollary}
\newcommand{\Tr}{\textrm{Tr}}
\newcommand{\ncspace}[1]{\ensuremath{#1}_{\text{nc}}}
\newcommand{\cA}{\mathcal{A}}
\newcommand{\cF}{\mathcal{F}}
\newcommand{\cFml}{\mathcal{F}_m^{ [ l ] } }
\newcommand{\dBn}{\left(\partial_S(\mathbb{B}^m)_{\text{nc}}\right)_n }
\newcommand{\dDN}{\left(\partial_S(\mathbb{D}^m)_{\text{nc}}\right)_N}
\newcommand{\dBN}{\left(\partial_S(\mathbb{B}^m)_{\text{nc}}\right)_N }
\newcommand{\cK}{\mathcal{K}}
\newcommand{\col}{\|_{\text{col}}}
\newcommand{\row}{\|_{\text{row}}}
\newcommand{\lra}{\longrightarrow}
\newcommand{\cV}{\mathcal{V}}
\newcommand{\cW}{\mathcal{W}}
\newcommand{\ED}{E_{\mathbb{D}^m}}
\begin{document}

\maketitle

%%%%%%%%%%%%%%%%%%%%%%%%%%%%%%%%%%%%%%%%%%%%%%%%%%
%%%%%%%%%%%%%%%%%%%%%%%%%%%%%%%%%%%%%%%%%%%%%%%%%%
\begin{abstract}
We define the Hardy spaces of free noncommutative functions on the noncommutative polydisc
and the noncommutative ball and study their basic properties.
Our technique combines the general methods of noncommutative function theory
and asymptotic formulae for integration over the unitary group.
The results are the first step in developing the general theory of free noncommutative bounded symmetric domains
on the one hand
and in studying the asymptotic free noncommutative analogues of classical spaces of analytic functions on the other.  
\end{abstract}

\section{Introduction}

\let\thefootnote\relax\footnotetext{
This work was partially supported by a grant of the Romanian National Authority for Scientific Research, CNCS – UEFISCDI, project number PN-II-ID-PCE-2011-3-0119 and Simmons Foundation Grant No. 360242.}

There emerged, over the years, a general paradigm of passing from the commutative setting
to the free noncommutative setting: we replace a vector space by the disjoint union of square matrices of all sizes
over this vector space. 
Some instances of this paradigm are Amitsur's theory of rational identities \cite[Chapter 8]{row80},
operator space theory \cite{effros-ruan,pisier}, 
free probability --- when viewed asymptotically \cite{vdn,speichernica}, 
and free noncommutative algebraic and semialgebraic geometry \cite{hkmcc-fcag}.
Another instance is free noncommutative function theory
that originated with the work of J. L. Taylor on noncommutative spectral theory
\cite{t1,t2} and was further developed by Voiculescu \cite{Voi04,Voi09}
and Kaliuzhnyi-Verbovetskyi--Vinnikov \cite{vv-dkv}
(we refer to \cite{vv-dkv} for a historical account and further references).

The aim of this paper is to take first steps towards the theory of Hardy spaces on noncommutative domains, 
within the framework of noncommutative function theory.
More specifically we introduce and study the Hardy space $H^2$ of noncommutative functions
on the noncommutative polydisc
\[
( \mathbb{D}^m )_{ \text{nc} } 
=
\coprod_{ n =1 }^\infty \{ ( X_1, \dots, X_m ) \in ( \mathbb{C}^{ n \times n } )^m : \| X_j \| < 1,\  j = 1, \dots, m \}
\]
and the noncommutative ball
\[
(  \mathbb{B}^m )_{ \text{nc} }
=
 \coprod_{ n =1}^\infty
\{ ( X_1, \dots, X_m ) \in ( \mathbb{C}^{ n \times n } )^m : \sum_{ i =1}^m X_i^\ast X_i <  I_n  \}.
\]
Our technique combines the methods of noncommutative function theory, especially the Taylor--Taylor noncommutative
power series expansions, see \cite[Chapter 7]{vv-dkv},
and asymptotic formulae for integration over the unitary group coming from random matrix
theory and free probability, see \cite{voi1,speicher-mingo} and \cite{collins,collins-sniady}.
The resulting Hardy spaces have some of the same basic properties as their commutative counterparts;
the most striking difference is that they are {\em not complete}, though their completions
can be identified as Hilbert spaces of noncommutative functions 
(in fact, noncommutative reproducing kernel Hilbert spaces) on a certain noncommutative set.
This is related to the different (as opposed to the commutative case) convergence patterns
for noncommutative power series, see \cite[Section 8.3]{vv-dkv}.
A general theory of noncommutative reproducing kernel Hilbert spaces and their multipliers
is developed in the forthcoming paper \cite{bmv}.

The paper is organized as follows.
Section 2 discusses the preliminaries on the asymptotic integration over the unitary group,
noncommutative function theory,
and the noncommutative unit balls of two operator space structures on ${\mathbb C}^m$,
namely the noncommutative polydisc and the noncommutative ball mentioned above,
their distiniguished boundaries and the invariant measures thereupon.
Section 3 contains the main results on the definition and the basic properties of the Hardy spaces.

In the case of bounded noncommutative functions, asymptotic integral formulae in terms
of tracial integrals over the distinguished boundary with respect to the invariant measure
have been obtained by Voiculescu \cite[Chapters 14--16]{Voi09}
for the noncommutative polydisc and the square noncommutative matrix ball,
i.e., the noncommutative unit ball of the noncommutative space over ${\mathbb C}^{m \times m}$
with its obvious operator space structure. 

The noncommutative polydisc and the noncommutative ball are of course noncommutative analogues
of the usual polydisc and ball in ${\mathbb C}^m$.
In a forthcoming paper we will consider noncommutative analogues of other matrix balls
and of irreducible bounded symmetric domains of type $II$ and $III$,
and develop a general theory of noncommutative Jordan triples.
It would be also interesting to consider in this context 
the unit ball in the $OH$ operator space norm on ${\mathbb C}^m$ \cite{pisier}.
In a different direction, the results here point towards a study of 
asymptotically defined spaces of noncommutative functions as analogues of various classical spaces of analytic functions.
In particular, it would be interesting to study the noncommutative Bargmann--Fock space 
in relation to free stochastic processes \cite{ajs}.

%%%%%%%%%%%%%%%%%%%%%%%%%%%%%%%%%%%%%%%%%%%%%%%%%%%
%%%%%%%%%%%%%%%%%%%%%%%%%%%%%%%%%%%%%%%%%%%%%%%%%%%

\section{Preliminaries}

%%%%%%%%%%%%%%%%%%%%%%%%%%%%%%%%%%%%%%%%%%

\subsection{Haar Unitaries and Free Independence}\label{haar}

Let $ N $ be a positive integer and $ \mathcal{U}(N) $ be the compact group of the $ N \times N $ unitary matrices with complex entries. The Haar measure on $ \mathcal{U} (  N ) $ will be denoted with $ d\mathcal{U}_N $.

 For each $ i, j \in \{1, 2, \dots, N \}$ we define the maps $ u_{ i, j }: \mathcal{U}( N ) \lra \mathbb{C} $ giving the $ i, j $-th entry of each element from $ \mathcal{ U}( N ) $. As shown in \cite{collins}, the maps $ u_{ i, j } $ are in  
$  L^\infty( \mathcal{U}(N), d\mathcal{U}_N ) $. 
Let $ S_n $ be the symmetric group of order  $ n $; for  $ \sigma \in S_n $  denote by 
$ \#(\sigma ) $  the number of cycles in a minimal decomposition of the permutation $\sigma $.
The following result is shown in \cite[Corollary 2.4]{collins-sniady}:

\begin{thm}\label{unitary:wg}
There exists a map $ \displaystyle  \text{Wg}: \mathbb{Z}_{ + } \times \cup_{ n=1}^\infty S_n \lra \mathbb{R } $   such that:
\begin{enumerate}
\item[(1)]The function $ \text{Wg} ( \cdot, \cdot ) $ is analytic at $ \infty $ in the first variable and, 
for each $ \sigma \in S_n $, the limit 
$ \displaystyle \lim_{N \rightarrow\infty}\frac{ \text{Wg}( N, \sigma ) }{ N^{2n-\#(\sigma ) } } $ 
exists and is finite.
\item[(2)] For any indices $i_k, i^\prime_k, j_k, j^\prime_k \in \{ 1, 2, \dots, N \} $, 
where $ 1 \leq k \leq n $, 
we have that 
\[
\int_{\mathcal{U}(N )}
  u_{i_1,j_1}\cdots u_{i_n, j_n}
 \overline{u_{i^\prime_1, j^\prime_1}}\cdots 
\overline{u_{i^\prime_n, j^\prime_n}} 
d\mathcal{U}_N 
=\sum_{\sigma, \tau\in S_n} \left( 
 \text{Wg}( N, \tau\sigma^{-1} ) 
\cdot
\prod_{ k =1}^n \delta_{i^{}_k, i^\prime_{ \sigma( k ) } }
\delta_ { j^{}_k , j^\prime_{ \tau( k ) } }  
\right)
\]
\end{enumerate}
Moreover, if $ m \neq n $ , then 
\[
\int_{\mathcal{U}(N ) }
 u_{i_1,j_1}\cdots u_{i_n, j_n}\overline{u_{i^\prime_1, j^\prime_1}}\cdots 
  \overline{u_{i^\prime_m, j^\prime_m}} 
    d\mathcal{U}_N
=
 0. 
\]
\end{thm}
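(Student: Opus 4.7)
The plan is to deduce everything from the invariance of Haar measure together with Schur--Weyl duality. First I would dispose of the last statement: if we replace $U$ by $e^{i\theta}U$ (which preserves the Haar measure), the integrand gets multiplied by $e^{i(n-m)\theta}$, so averaging over $\theta\in[0,2\pi]$ forces the integral to vanish whenever $n\neq m$.

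For the main identity, the natural object to study is the tensor-valued integral
\[
I_n(N)\ :=\ \int_{\mathcal{U}(N)} U^{\otimes n}\otimes \overline{U}^{\otimes n}\, d\mathcal{U}_N,
\]
viewed as an element of $\mathrm{End}\bigl((\mathbb{C}^N)^{\otimes n}\bigr)$. Two-sided invariance of the Haar measure implies that $I_n(N)$ is the orthogonal projection onto the subspace of $\mathcal{U}(N)$-invariants in $(\mathbb{C}^N)^{\otimes n}\otimes \overline{(\mathbb{C}^N)^{\otimes n}}$, equivalently an element of the commutant of $U^{\otimes n}$ acting on $(\mathbb{C}^N)^{\otimes n}$. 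By Schur--Weyl duality, this commutant is spanned by the permutation operators $\{V_\sigma:\sigma\in S_n\}$, so
\[
I_n(N)\ =\ \sum_{\sigma\in S_n} c_\sigma(N)\, V_\sigma .
\]
Reading off matrix entries immediately yields an identity of the form claimed in (2); matching the indices shows that one of the summations corresponds to the choice of permutation $\sigma$ in the decomposition of the invariant, while the other, $\tau$, appears through the coefficients.

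The coefficients $c_\sigma(N)$ are determined by inverting the Gram matrix of the $V_\sigma$'s. A direct computation gives
\[
\mathrm{tr}(V_\sigma V_\tau^{-1})\ =\ N^{\#(\sigma\tau^{-1})},
\]
so if one defines $\mathrm{Wg}(N,\cdot)$ as the entries of the inverse of the matrix $\bigl[N^{\#(\sigma\tau^{-1})}\bigr]_{\sigma,\tau\in S_n}$ (appropriately indexed by the class function $\sigma\tau^{-1}$, since the matrix depends only on it), one recovers precisely the formula in the statement. Invertibility of the Gram matrix for all sufficiently large $N$ follows from the fact that its diagonal is $N^n\cdot \mathbf{1}$ while all off-diagonal entries are at most $N^{n-1}$, so the matrix $N^{-n}G(N)$ is a small perturbation of the identity.

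The main obstacle is the asymptotic assertion (1). The strategy is to factor $G(N)=N^n(I+E(N))$ where $E(N)$ collects the lower-order terms $N^{\#(\sigma\tau^{-1})-n}$; for $N$ large the Neumann series $(I+E(N))^{-1}=\sum_{k\geq 0}(-E(N))^k$ converges and yields an expansion of $\mathrm{Wg}(N,\sigma)$ in powers of $1/N$. The delicate point is to track, term by term, the exact power of $N$ contributed by each product of entries of $E(N)$ and to verify that the dominant contribution to $\mathrm{Wg}(N,\sigma)$ is of order $N^{-(2n-\#(\sigma))}$; this is a combinatorial statement about how cycle numbers compose under multiplication in $S_n$, ultimately encoded by the length function on $S_n$ with respect to the generating set of transpositions. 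Analyticity at infinity follows from uniform convergence of the Neumann series on $|1/N|$ small, and the existence of the limit $\lim_{N\to\infty} \mathrm{Wg}(N,\sigma)/N^{-(2n-\#(\sigma))}$ reduces to identifying the leading coefficient with a Möbius-type function on the poset of non-crossing partitions, which is where the combinatorial heart of the argument lies.
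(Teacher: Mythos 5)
This theorem is not proved in the paper at all: it is quoted verbatim from Collins--\'Sniady (cited there as Corollary 2.4), so there is no in-paper argument to compare against. Your sketch is, in outline, exactly the standard Weingarten-calculus proof from that source: the rotation $U\mapsto e^{i\theta}U$ for the $m\neq n$ vanishing, the identification of $\int U^{\otimes n}\otimes\overline U^{\otimes n}\,d\mathcal U_N$ with the conditional expectation onto the commutant, Schur--Weyl duality to write it as $\sum_\sigma c_\sigma(N)V_\sigma$, and the Gram matrix $\bigl[N^{\#(\sigma\tau^{-1})}\bigr]$ whose inverse defines $\mathrm{Wg}$. All of that is correct and is the right route.

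Two points in your write-up are asserted rather than established. First, the Gram matrix is genuinely invertible only for $N\geq n$ (for $N<n$ the operators $V_\sigma$ are linearly dependent), so your Neumann-series definition of $\mathrm{Wg}$ only produces the formula in (2) for large $N$; to get the statement as written for all $N$ one must define $\mathrm{Wg}$ via the pseudo-inverse, as Collins--\'Sniady do, and check that the entry formula survives. Second, and more substantially, the entire content of part (1) --- that the leading order of $\mathrm{Wg}(N,\sigma)$ is exactly $N^{-(2n-\#(\sigma))}$ and that the limit exists --- is precisely the combinatorial step you defer (``the delicate point is to track, term by term, the exact power of $N$''). The key fact needed is that $|\sigma|=n-\#(\sigma)$ is the word length in transpositions, hence subadditive under composition, so that in the expansion $(I+E(N))^{-1}=\sum_k(-E(N))^k$ every product of entries contributes at most $N^{-|\sigma|}$ beyond the prefactor $N^{-n}$; identifying the coefficient of the extremal terms with the M\"obius function of $NC(n)$ then gives the limit. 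As it stands your argument establishes analyticity at $\infty$ (since $\mathrm{Wg}(N,\sigma)$ is a rational function of $N$) but only gives the bound $\mathrm{Wg}(N,\sigma)=O(N^{-n})$ without the refined exponent, so part (1) remains unproven in your sketch.
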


An immediate consequence of the result above is the following:

\begin{cor}\label{rem:tr}
 Let $ U: \mathcal{U}(N) \lra \mathbb{C}^{N \times N }$, $ U = [ u_{i, j}]_{i, j=1}^N $. 
Then, for all non-zero integers $\alpha$, 
\[
\int_{ \mathcal{U}(N ) } \textnormal{Tr} ( U^\alpha ) d\mathcal{U}_N = 0,
\]
where $ \textnormal{Tr} $ denotes the non-normalized trace.
\end{cor}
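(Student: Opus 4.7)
The plan is to reduce $\text{Tr}(U^\alpha)$ to a sum of monomials in the entries $u_{i,j}$ and $\overline{u_{i,j}}$ where the number of unconjugated factors differs from the number of conjugated factors, and then invoke the vanishing part of Theorem \ref{unitary:wg}.

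First I would treat the case $\alpha > 0$. Expanding the matrix product gives
\[
\text{Tr}(U^\alpha) = \sum_{i_1,\dots,i_\alpha = 1}^N u_{i_1, i_2} u_{i_2, i_3} \cdots u_{i_{\alpha-1}, i_\alpha} u_{i_\alpha, i_1},
\]
so each term is a product of exactly $\alpha$ entries of $U$ and zero entries of $\overline{U}$. Applied term by term, the second formula in Theorem \ref{unitary:wg} (with $n = \alpha$, $m = 0$, so $m \neq n$ since $\alpha \neq 0$) makes each integral vanish, and linearity finishes this case.

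For $\alpha < 0$, I would use unitarity: $U^{-1} = U^\ast$, so $U^\alpha = (U^\ast)^{|\alpha|}$, whose $(i,j)$-entry is a product of $|\alpha|$ entries of $\overline{U}$. Thus
\[
\text{Tr}(U^\alpha) = \sum_{i_1,\dots,i_{|\alpha|} = 1}^N \overline{u_{i_2, i_1}}\, \overline{u_{i_3, i_2}} \cdots \overline{u_{i_1, i_{|\alpha|}}},
\]
a sum of products with $n = 0$ unconjugated and $m = |\alpha|$ conjugated factors. Again $m \neq n$, so the vanishing part of Theorem \ref{unitary:wg} applies term by term.

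I do not anticipate a real obstacle: this is essentially a bookkeeping argument once one notices that every term of $\text{Tr}(U^\alpha)$, for $\alpha \neq 0$, has a pure degree in the entries (either purely holomorphic or purely antiholomorphic), which triggers the orthogonality statement. The only minor care needed is to write out the entries of $U^\ast$ correctly (i.e., $(U^\ast)_{i,j} = \overline{u_{j,i}}$) when handling negative $\alpha$.
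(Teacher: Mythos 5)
Your proposal is correct and follows essentially the same route as the paper: expand $\textnormal{Tr}(U^\alpha)$ into monomials in the entries, observe that each term has an unequal number of conjugated and unconjugated factors, and apply the vanishing part of Theorem \ref{unitary:wg}, handling $\alpha<0$ via $U^{-1}=U^\ast$. Your write-up is in fact slightly more explicit than the paper's (which dispatches the negative case with ``is similar'').
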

\begin{proof}
 Suppose  that $ \alpha > 0 $. Then
\[
 \int_{ \mathcal{U}(N ) } \Tr( U^\alpha ) d\mathcal{U}_N =
\sum_{ 1 \leq i_1, \dots, i_\alpha \leq N } 
\int_{ \mathcal{U}(N ) } u_{ i_1, i_2} \cdots u_{ i_{ \alpha - 1 }, i_{\alpha } } u_{ i_{\alpha}, i_1 } d\mathcal{U}_N.
\]
From the last part of Theorem \ref{unitary:wg}, all the terms in the above summation are zero, hence the conclusion. 
Since $ U^{ -1} = U^\ast $, the case $ \alpha < 0 $ is similar.
\end{proof}

When studying the joint asymptotic behavior of several large random matrices with independent entries, 
an important tool is the notion of \textbf{free independence} (see, for example, \cite{voi1}, \cite{mingo-popa}). 
As shown in the extensive literature on the subject (see \cite{vdn}, \cite{speichernica}, \cite{mingo-popa},  \cite{mingo-popa-t}),  
this is in fact the natural relation of independence in a non-commutative framework.   
The precise definition for the version  of  the notion free independence 
that will be used in the present work is presented below.
Suppose  that $\mathcal{A}$ is a unital C$^\ast$-algebra and $ \phi: \mathcal{ A } \lra \mathbb{C} $ 
is a positive conditional expectation. 
A family $ \{ \mathcal{A}_j\}_{j \in J } $ of  unital C$^\ast$-subalgebras  of $ \mathcal{A} $ is 
said to be \textbf{free} if any alternating product of centered (with respect to $\phi $) 
elements from $ \{ \mathcal{A}_j\}_{j \in J } $  is centered, i.e., 
for any $ n > 0 $, any $\epsilon(k) \in J $ 
($ 1\leq k \leq n $) such that 
$\epsilon(k) \neq \epsilon( k + 1 ) $ and any $ a_k \in \mathcal{A}_{ \epsilon( k ) }$ 
such that $\phi(a_k) = 0$ we have that 
$\phi(a_1 a_2 \cdots a_n ) = 0 $.  
Subsets  $  M_1, M_2, \dots, M_n $  of $ \mathcal{A} $ are said to be free or free independent 
if the unital C$^\ast$-algebras generated by the elements of each of them form a free family.

The following result, Theorem \ref{thm:free},  
is proved in \cite{voi1} and, in a more general framework,  in \cite{collins-sniady}, \cite{speicher-mingo}.
We first introduce some notation.
Let $\mathfrak{A} = \{ A_{ j, N }\}_{ j\in J, N \geq 1 } $
 be an ensemble of matrices such that
 $ A_{j, N }\in \mathbb{C }^{ N \times N } $ for all $ j \in J $. 
The ensemble $ \mathfrak{ A } $ is said to have 
limit distribution if for any
 $ m \in \mathbb{Z}_{ + } $ and $ j_1, \dots, j_m \in J $ 
 the limit
$ \displaystyle   \lim_{ N \rightarrow \infty }\frac{1 }{ N } \Tr ( A_{ j_1, N }\cdots A_{ j_m, N } ) $
 exists and is finite. 
Also, let $ j_1, \dots, j_s \in J $;
a polynomial in $ s $ non-commutative variables, 
$ p \in \mathbb{C}\langle x_{j_1}, \dots, x_{j_s} \rangle $
is said to be asymptotically centered in $ \mathfrak{A} $  if
 $ \displaystyle \lim_{ N \rightarrow \infty } \frac{1}{N} \Tr ( p( A_{ j_1, N}, \dots, A_{ j_s, N } ) ) = 0 $.

\begin{thm}\label{thm:free}
Let $  m $ be a positive  integer; for $ 1 \leq k \leq m $  and $ 1 \leq i, j \leq N $ consider the random variables 
$ u_{ i, j }^{ ( k )}: \mathcal{ U }( N ) \lra \mathbb{C} $ such that
 $ u_{ i, j }^{ ( k )} $  and $  u_{ i,j } $  are identically distributed for each $ i, j , k $ and  
 $ \{ u_{i, j }^{ ( k )} \}_{ i, j = 1 }^N $ 
 are independent.
Finally, for each $k $ and $ N ,$  consider the matrix  
$ U_{ k, N}  \in  L^\infty( \mathcal{U}(N), d\mathcal{U}_N )^{ N \times N } $,
 having the entries $ u_{ i, j}^{ ( k )} $. 

Suppose that  
$ \mathfrak{A} = \{ A_{ j, N } \}_{j \in J, N \geq 1 } $
 is an ensemble of complex matrices that has limit distribution.
 Then the ensembles  of random matrices
 $\{ U_{1, N}, U_{ 1, N }^\ast \}_{ N \geq 1 } $, \\
$\{ U_{2, N}, U_{ 2, N }^\ast \}_{ N \geq 1 } $, \dots,
$\{ U_{ m, N}, U_{ m , N }^\ast \}_{ N \geq 1 } $ 
and
$ \mathfrak{A} $
are asymptotically free with respect to the functional
 $  \displaystyle
 \int_{ \mathcal{U} ( N ) } \frac{1}{N} \text{Tr} ( \cdot ) d\mathcal{U}_N  $, in the following sense:
\[
\lim_{ N \rightarrow \infty } \int_{ \mathcal{U} ( N ) } \frac{1}{N}
 \text{Tr} ( p_1 \cdot p_2 \cdots p_k ) d\mathcal{U}_N
= 0 
\]
for any $ p_1, \dots, p_k $ either centered polynomials in $ \{ U_{ l, N}, U^\ast_{l, N } \}_{ N \geq 1 } $ for some  $ l $ , or asymptotically centered polynomials in elements of $ \mathfrak{A} $, such that $ p_s $ and $ p_{s + 1} $ are polynomials in elements of different ensembles for all $ s $.
\end{thm}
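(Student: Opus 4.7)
The plan is to use the Weingarten formula from Theorem \ref{unitary:wg} together with an asymptotic (genus-type) estimate of its terms, reducing the statement to a combinatorial claim about non-crossing pairings.

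First, by multilinearity I would reduce each Haar slot to a single monomial. A centered polynomial in $\{U_{l,N}, U_{l,N}^{\ast}\}$ is a linear combination of words in $U_{l,N}, U_{l,N}^{\ast}$; since $U^{\ast} = U^{-1}$, each word equals $U_{l,N}^{\alpha}$ for some integer $\alpha$, and Corollary \ref{rem:tr} identifies the centering condition (with respect to $\int \tfrac{1}{N}\mathrm{Tr}(\cdot)\,d\mathcal{U}_N$) with $\alpha \neq 0$. The $\mathfrak{A}$-slots I keep as asymptotically centered polynomials $B_s$. After this reduction the trace becomes
\[
\frac{1}{N}\mathrm{Tr}\bigl(U_{l_1,N}^{\alpha_1} B_1 U_{l_2,N}^{\alpha_2} B_2 \cdots U_{l_r,N}^{\alpha_r} B_r\bigr),
\]
where $\alpha_s \neq 0$ at each Haar slot and consecutive slots come from distinct ensembles.

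Next I would expand this trace as a sum over one cyclic index, producing a product of matrix entries of the $U_{l,N}$'s and of the matrices $B_s$. Independence of the Haar ensembles across different $l$ allows me to factorize the integral over $\mathcal{U}(N)^m$ into a product of integrals, one for each $l$, and to apply Theorem \ref{unitary:wg}(2) to each factor. This gives a sum over pairs of permutations $(\sigma_l,\tau_l)$, one pair for each $l$, with Weingarten weight and delta-function constraints that force row/column index identifications.

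For the asymptotic analysis I would invoke part (1) of Theorem \ref{unitary:wg}: $\mathrm{Wg}(N,\sigma) = O\bigl(N^{\#(\sigma)-2n}\bigr)$. Counting the number of free index summations that survive the delta-function constraints yields a genus expansion in powers of $1/N$; only the planar configurations, namely non-crossing pairings of $U_{l,N}^{\alpha}$ with $U_{l,N}^{-\alpha}$ compatible with the cyclic trace order, can produce an $O(1)$ contribution. At this leading order, collapsing a matched pair of Haar powers replaces the word between them by $\tfrac{1}{N}\mathrm{Tr}$ of that word, so iterating one always arrives at an innermost block. Because the slots alternate between distinct ensembles and the exponents $\alpha_s$ are nonzero, such an innermost block must contain at least one $B_s$ slot, and its trace is dominated by $\tfrac{1}{N}\mathrm{Tr}(B_s)$ (or a product along a single ensemble, which is again of this type), which by the asymptotic centering hypothesis tends to zero. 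Hence every planar contribution vanishes in the limit, and together with the $O(1/N)$ control on non-planar terms this yields the claim.

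The main obstacle is the combinatorial bookkeeping in the genus expansion: one must verify carefully that the non-crossing Weingarten configurations are precisely the $O(1)$ contributions, and that the alternation of ensembles together with $\alpha_s \neq 0$ forces every such configuration to reduce ultimately to an asymptotically centered trace. The algebra (independence across $l$, reduction to monomials, application of the Weingarten formula) is routine; the geometric/combinatorial argument identifying the innermost block as an asymptotically centered one is where the hypotheses of the theorem are used in an essential way.
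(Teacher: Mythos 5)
The paper does not actually prove Theorem \ref{thm:free}: it is quoted as a known result, with the proof attributed to \cite{voi1} and, in the Weingarten-calculus framework you are following, to \cite{collins-sniady} and \cite{speicher-mingo}. So there is no internal proof to compare against; your sketch has to stand on its own, and as written it does not. The reductions you describe (words in $U_{l,N},U_{l,N}^{\ast}$ collapse to powers $U_{l,N}^{\alpha}$ with $\alpha\neq 0$ by Corollary \ref{rem:tr}, factorization of the Haar integral over independent ensembles, application of Theorem \ref{unitary:wg}(2) to each factor, and the bound $\mathrm{Wg}(N,\sigma)=O(N^{\#(\sigma)-2n})$ from part (1)) are all correct and are indeed the standard opening moves. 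But the entire content of the theorem lies in the step you explicitly defer: verifying that the power of $N$ attached to a configuration $(\sigma_l,\tau_l)_l$ after summing the surviving free indices is at most $N^{1}$ (so that $\tfrac{1}{N}\mathrm{Tr}$ stays bounded), that equality forces a non-crossing structure, and that every such leading configuration produces a factor that vanishes asymptotically. Announcing that ``the combinatorial bookkeeping is the main obstacle'' is an acknowledgement that the proof has not been given, not a proof.

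There is also a concrete error in the one place where you do argue the vanishing of the leading terms. You claim that, because slots alternate between distinct ensembles and all $\alpha_s\neq 0$, ``such an innermost block must contain at least one $B_s$ slot'' whose asymptotically centered trace kills the contribution. This fails when the alternating product contains no elements of $\mathfrak{A}$ at all, e.g.\ $p_1=U_{1,N}^{a}$, $p_2=U_{2,N}^{b}$, $p_3=U_{1,N}^{c}$ with $a,b,c\neq 0$: the theorem must still give a vanishing limit, but your mechanism (an asymptotically centered $B_s$ inside the innermost block) is unavailable. In that case the vanishing comes instead from the fact that a non-crossing pairing must match a $u^{(l)}$ with a $\overline{u^{(l)}}$ from the same ensemble, which is impossible inside a single block $U_{l}^{\alpha}$ (all letters there are of one conjugation type) and is obstructed across blocks by the alternation; this is a different, and more delicate, combinatorial argument than the one you state. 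To make the proposal into a proof you would need to carry out this case analysis in full, or simply do what the paper does and cite \cite{voi1} or \cite{collins-sniady}.
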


\begin{remark}
\label{remark:new1}
In the framework of Theorem  \ref{thm:free} above, suppose that the
joint distribution of elements of $\mathfrak{A} $ does not depend on $ N $, i.e. for any
 $ j_1, j_2, \dots, j_s $ there exists a complex constant $ c(j_1, j_2, \dots, j_s)$ such that 
 \[
 \frac{1}{N} \Tr ( A_{ j_1, N} A_{ j_2, N} \cdots A_{ j_s, N } )  = c(j_1, j_2, \dots, j_s).
 \]
 
 Then 
\[ \lim_{ N \rightarrow \infty } \int_{ \mathcal{U} ( N ) } 
  \text{Tr} ( p_1 \cdot p_2 \cdots p_k ) d\mathcal{U}_N
 < \infty.
 \]
\end{remark}
\begin{proof}
First note that the condition on $\mathfrak{A} $ implies that the polynomials $p_1, p_2, \dots, p_k $ in elements of $ \mathfrak{A} $ are centered for any $ N $. Then using the analyticity in the first variable at $\infty$ of the function $\text{Wg}(\cdot, \cdot) $ from  Theorem \ref{unitary:wg}(i), it follows that the expression 
$  \displaystyle 
\int_{ \mathcal{U} ( N ) } \frac{1}{N}
 \text{Tr} ( p_1 \cdot p_2 \cdots p_k ) d\mathcal{U}_N $
 expands at $\infty $ as a Laurent series in $N $, with coefficients depending on the polynomials $ p_1, p_2, \dots, p_k $. Theorem \ref{thm:free} from above implies that all the coefficients of monomials $N^q $ with $ q \geq 0$ are null, hence the conclusion.
\end{proof}

Throughout the paper, $ \cF_m $ will denote the free monoid with $ m $ generators 
$ \{ 1, \dots, m \} $.
 The elements of $ \cF_m $ are arbitrary  words $ w = {  w_1 } \cdots { w_{ l -1 } }  {w_l } $; the length 
of the word $ w $ will be denoted by $ | w | = l $.
 We will also use the notation $ \cFml $ 
for the set of all  words from $ \cF_m $ of length $ l $.

In the next section we will utilize the following consequences of the Theorems \ref{unitary:wg} 
and \ref{thm:free} above:

\begin{cor}\label{cor:24}
 For $ y = y_1 \cdots y_{ t -1 } y_t $ a word in $ \cF_m $ 
and  $ U_{ 1, N }, \dots, U_{ m, N } $ as in Theorem \ref{thm:free},
we will denote 
$ U_N^y = U_{ y_1, N } \cdots U_{ y_{ t-1} ,  N } U_{ y_t, N}. $
With this notation, for any $ w, v \in \cF_m $ we have that:

\begin{enumerate}
\item[(i)]if $ | v | \neq | w | $, then
   $ \displaystyle
 \int_{ \mathcal{U}( N ) } \Tr\left( ( U^w_N)^\ast U^v_N \right)  d\mathcal{U}_N = 0
$,   for any positive integer
 $ N $;
 \item[(ii)] if $ | v | = | w | $ but 
 $ v \neq w $, then
  $ \displaystyle
\lim_{N \rightarrow \infty} \int_{ \mathcal{U}( N ) } 
\frac{1} {\sqrt{ N}} \Tr
\left( ( U^w_N)^\ast U^v_N \right) d\mathcal{U}_N  = 0 $;
\item[(iii)]if $ v = w $, then 
$ \displaystyle
\lim_{N \rightarrow \infty} \int_{ \mathcal{U}( N ) } 
\frac{1} { N} \Tr
\left( ( U^w_N)^\ast U^v_N \right) d\mathcal{U}_N  = 1 $
\end{enumerate}
\end{cor}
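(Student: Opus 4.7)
The plan is to handle the three parts separately. Parts (i) and (iii) follow from Theorem~\ref{unitary:wg} and basic algebra. For (iii), since each $U_{k,N}$ is unitary, the product $(U_N^w)^{\ast}U_N^w=U_{w_{|w|},N}^{\ast}\cdots U_{w_1,N}^{\ast}U_{w_1,N}\cdots U_{w_{|w|},N}$ telescopes to $I_N$ via the cancellations $U_{w_j,N}^{\ast}U_{w_j,N}=I_N$, so $\frac{1}{N}\Tr((U_N^w)^{\ast}U_N^w)=1$ identically in $N$. For (i), I would expand the trace as a sum over entries, giving terms of the form $\overline{u_{*,*}^{(w_1)}}\cdots\overline{u_{*,*}^{(w_{|w|})}}u_{*,*}^{(v_1)}\cdots u_{*,*}^{(v_{|v|})}$. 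By independence of the $U_{k,N}$'s across $k$, the expectation of each such term factors by $k$; the $k$-th factor involves $|w|_k$ conjugate entries of $U_{k,N}$ and $|v|_k$ un-conjugate ones, where $|w|_k$ denotes the number of occurrences of $k$ in $w$. Since $|v|\neq|w|$, necessarily $|v|_k\neq|w|_k$ for some $k$, and the ``moreover'' clause of Theorem~\ref{unitary:wg} kills that factor, hence the whole term.

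For (ii), the key step is to reduce $(U_N^w)^{\ast}U_N^v=U_{w_{|w|},N}^{\ast}\cdots U_{w_1,N}^{\ast}U_{v_1,N}\cdots U_{v_{|v|},N}$ as a word in the free group $F_m$ on formal letters $U_1,\ldots,U_m$, using the relations $U_{k,N}^{\ast}U_{k,N}=U_{k,N}U_{k,N}^{\ast}=I_N$. Since $v\neq w$, the element $w^{-1}v$ is non-trivial in $F_m$, so the reduced form is $U_{k_1,N}^{a_1}U_{k_2,N}^{a_2}\cdots U_{k_r,N}^{a_r}$ with $r\geq 1$, $k_j\neq k_{j+1}$, and $a_j\in\mathbb{Z}\setminus\{0\}$. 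Each factor $U_{k_j,N}^{a_j}$ has $\int\Tr(U_{k_j,N}^{a_j})\,d\mathcal{U}_N=0$ by Corollary~\ref{rem:tr}, so is centered for every $N$, and adjacent factors come from different, and independent, ensembles $\{U_{k,N},U_{k,N}^{\ast}\}$. This is exactly the setting of Remark~\ref{remark:new1} (with trivial $\mathfrak{A}$), which yields that $\int\Tr((U_N^w)^{\ast}U_N^v)\,d\mathcal{U}_N$ is bounded uniformly in $N$; dividing by $\sqrt{N}$ then gives the claimed limit.

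The main obstacle is precisely the $\sqrt{N}$ rate in (ii): bare asymptotic freeness as in Theorem~\ref{thm:free} yields only the weaker bound $\frac{1}{N}\int\Tr(\cdot)\,d\mathcal{U}_N\to 0$. The upgrade from ``$o(N)$'' to ``$O(1)$'' for the un-normalized integral is the content of Remark~\ref{remark:new1}, and it is what makes the argument work. The free-group reduction producing an alternating product of centered factors is otherwise routine bookkeeping.
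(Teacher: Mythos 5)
Your proposal is correct and follows essentially the same route as the paper: part (i) by factoring the entrywise expansion over the independent ensembles and invoking the ``moreover'' clause of Theorem~\ref{unitary:wg}, part (iii) by telescoping, and part (ii) by reducing $(U_N^w)^\ast U_N^v$ to an alternating product of centered powers of distinct $U_{k,N}$'s and applying Corollary~\ref{rem:tr} together with Remark~\ref{remark:new1} to get an $O(1)$ bound on the unnormalized trace integral. Your explicit free-group normal form just makes precise the grouping into powers that the paper leaves implicit after its reduction to the case $v_1\neq w_1$.
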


\begin{proof}
Suppose that $ w =  w_1 \cdots w_{ t - 1 } w_t $ and $ v = {v_1} \cdots v_{ s -1 } v_s $ 
(here $ | w | = t $ and $ | v | = s $). For part  (i), let 
\[ 
 L = \{
 l = (  l_{ -| v | }, l_{ -| v | +1 }, \dots, l_0, l_1, \dots, l_{ | w |} ) \in \{ 1, \dots, N \}^{ | v | + | w | +1 } : 
  l_{ - | v | } = l_{ | w | }
 \}
\]
and  $ V_k = \{ j \in \{ 1, \dots, | v | \}: v_j = k \}$, respectively 
$ W_k = \{ j \in \{ 1, \dots, | w | \} : w_j = k \} $.
Then,  from the independence of the families $ \{ u_{i, j }^{ ( k )} \}_{ i, j = 1 }^N $,
\begin{align*}
\int_{ \mathcal{U}( N ) } \text{Tr}\left( ( U^w_N)^\ast   U^v_N \right)
 &  d\mathcal{U}_N
 = \sum_{ l \in L } \int_{ \mathcal{U} ( N ) } 
\prod_{ k =1 }^{ | w | } \overline{  u^{ ( w_k ) }_{ l_{ k-1 } , l_{ k } } }
\cdot \prod_{ k =1 }^{ | v | } u^{ ( v_k ) }_{ l_{- k+1 } , l_{-  k } }
d\mathcal{U}_N\\
=& \sum_{ l \in L } \prod_{ r =1}^m ( \int_{\mathcal{U}( N ) }  
\prod_{ k\in W_r } \overline{  u^{ ( w_k ) }_{ l_{ k-1 } , l_{ k } } } \cdot
\prod_{ k \in V_r } u^{ ( v_k ) }_{ l_{- k+1 } , l_{-  k } }
d\mathcal{U}_N ).
\end{align*}
From Theorem \ref{unitary:wg}, if $\text{card}( W_r ) \neq \text{card}(V_r)$, 
then the coresponding factor in the above product vanishes, hence the conclusion.

   For (ii),  it suffices to consider the case when  $ v_1 \neq w_1 $ (since
 $ U_{k, N}^\ast U_{k, N } = \text{Id}_N $). From Corollary \ref{rem:tr}, 
$ \displaystyle 
 \int_{\mathcal{U}( N ) } \frac{ 1 } { N } \text{Tr } ( [ U_{ k, N } ]^p ) d\mathcal{U}_N = 0,
$
 for all integers $ p $, all $ N >1 $ and  all $ 1 \leq k \leq m $, and 
the conclusion follows now  from Remark \ref{remark:new1}.

Finally, for (iii),  if $ w = v $, then $ \left( U_N^w\right)^\ast U_N^v = \text{Id}_N $, and the assertion is trivial.
\end{proof}

An analogous result for the matricial block entries of a Haar unitary is given below.

\begin{cor}\label{cor:2}
Fix $ m $ a positive integer and suppose that 
$ U = [ u_{ i, j } ]_{ i, j = 1}^{mN} $, 
with the functions $ u_{ i, j }: \mathcal{U}(mN) \lra \mathbb{C} $ as defined above.
%$ U \in  L^\infty( \mathcal{U}(mN), d\mathcal{U}_{ m N } )^{ mN \times mN } $ , 
For $ 1 \leq k \leq m $ , consider 
$ U_k \in L^\infty( \mathcal{U}(mN), d\mathcal{U}_{ m N } )^{N \times N } $  
given by
 $ U_k =[ u_{ i, ( k - 1 ) N +j } ]_{ i, j = 1}^N $.
(I. e., $ U_1, \dots, U_m $ are the $ N \times N $ matricial block entries of the first $ N \times m N $ matricial row of $ U $).

 Let  $ v, w \in \cF_m $ , and, for 
 $ y =  y_ 1 \cdots y_{ s -1} y_s \in \cF_m $,   denote 
 $ U^y = U_{ y_1 } \cdots U_{ y_{ s -1} } U_{ y_s } $.
 Then:
\begin{enumerate}
\item[(i)]If $ | v | \neq | w | $, $\displaystyle \int_{ \mathcal{ U }( mN ) } \text{Tr}\left( ( U ^w)^\ast U^v
\right)d\mathcal{U}_{mN} = 0 $ for any positive integer $ N $;
\item[(ii)]If $ | v | = | w |$, but $ v \neq w $, then 
$ \displaystyle \lim_{ N \rightarrow \infty } 
\int_{ \mathcal{ U }( mN ) } \frac{ 1 } {\sqrt{ N } }
\text{Tr}\left( ( U ^w)^\ast U^v \right)
  d\mathcal{U}_{mN} = 
   0$;
\item[(iii)]If $ v  =  w  $, then 
$ \displaystyle \lim_{ N \rightarrow \infty } 
\int_{ \mathcal{ U }( mN ) } \frac{ 1 } { N }
\text{Tr}\left( ( U ^w)^\ast U^v \right)
  d\mathcal{U}_{mN} = 
   \frac{1}{m^{| v |} } $.
\end{enumerate}

\end{cor}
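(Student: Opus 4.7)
Part (i) is immediate from the last assertion of Theorem \ref{unitary:wg}: expanding $\Tr((U^w)^*U^v)$ as a sum over matrix indices produces monomials each containing $|v|$ factors $u_{\cdot,\cdot}$ and $|w|$ factors $\overline{u_{\cdot,\cdot}}$, every such integral being zero when $|v|\ne|w|$.

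For parts (ii) and (iii), set $s:=|v|=|w|$ and apply the Weingarten formula of Theorem \ref{unitary:wg}(2) directly. Expanding
\[
\Tr\bigl((U^w)^*U^v\bigr) = \sum_{i,j,\vec k,\vec l}\prod_{\alpha=1}^s u_{k_{\alpha-1},(v_\alpha-1)N+k_\alpha}\prod_{\beta=1}^s\overline{u_{l_{\beta-1},(w_\beta-1)N+l_\beta}}
\]
(with $k_0=l_0=i$, $k_s=l_s=j$) and integrating term-by-term, the column Kronecker delta $\delta_{(v_\alpha-1)N+k_\alpha,\,(w_{\tau(\alpha)}-1)N+l_{\tau(\alpha)}}$ forces both $v_\alpha=w_{\tau(\alpha)}$ and $k_\alpha=l_{\tau(\alpha)}$, whence
\[
\int_{\mathcal{U}(mN)}\!\!\Tr\bigl((U^w)^*U^v\bigr)\,d\mathcal{U}_{mN} \;=\; \sum_{\substack{\sigma,\tau\in S_s\\ v=w\circ\tau}}\text{Wg}(mN,\tau\sigma^{-1})\,N^{c(\sigma,\tau)},
\]
with $c(\sigma,\tau)$ the number of connected components of the graph on $\{0,1,\ldots,s\}$ whose edges are $\{0,\sigma(1)-1\}$, $\{s,\tau(s)\}$, and $\{\tau(\alpha),\sigma(\alpha+1)-1\}$ for $1\le\alpha\le s-1$. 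By the asymptotic $\text{Wg}(mN,\pi)\sim(mN)^{\#(\pi)-2s}$ of Theorem \ref{unitary:wg}(1), each summand has order $N^{c(\sigma,\tau)+\#(\tau\sigma^{-1})-2s}$.

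The crux is the combinatorial inequality $c(\sigma,\tau)+\#(\tau\sigma^{-1})\le 2s+1$, with equality \emph{only} for $(\sigma,\tau)=(e,e)$: attaining $c=s+1$ requires every generator edge above to be a loop, forcing $\sigma(1)=1$, $\tau(s)=s$, and $\sigma(\alpha+1)=\tau(\alpha)+1$; combined with $\#(\tau\sigma^{-1})=s$, i.e.\ $\sigma=\tau$, these yield $\sigma=\tau=e$. Every other pair satisfies $c+\#(\tau\sigma^{-1})\le 2s$. Parts (ii) and (iii) then follow: when $v\ne w$ the leading pair $(e,e)$ is excluded by the constraint $v=w\circ\tau$, so only $O(1)$ contributions remain and division by $\sqrt N$ gives $o(1)$; when $v=w$, the pair $(e,e)$ contributes $N^{s+1}\,\text{Wg}(mN,e)\sim N/m^s$ while all others contribute $O(1)$, so division by $N$ yields the limit $1/m^{|v|}$.

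The main obstacle is the combinatorial inequality above — a special case of the genus-expansion bound in Weingarten calculus — whose verification requires carefully tracking how the $(s+1)$-vertex graph loses components when one or more of the three types of generator edges fail to be loops, and in particular confirming that the maximally disconnected configuration $c=s+1$ together with $\sigma=\tau$ is rigidly realized only by $(e,e)$.
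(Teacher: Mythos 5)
Your argument is correct, but it takes a genuinely different route from the paper for parts (ii) and (iii). The paper handles (ii) by reducing to $v_1\neq w_1$, writing each block as $U_k=E_{1,1}UE_{k,1}$ with $E_{i,j}=e_{i,j}\otimes I_N$, centering these constant matrices, and invoking asymptotic freeness (Theorem \ref{thm:free} together with Remark \ref{remark:new1}) to conclude that the unnormalized trace of the resulting alternating product stays bounded, so that division by $\sqrt{N}$ kills it; part (iii) the paper dismisses as a consequence of ``$U_k^*U_k=\operatorname{Id}_N$'' -- an identity that is false for the $N\times N$ blocks of an $mN\times mN$ unitary (and which would give limit $1$ rather than $1/m^{|v|}$), so the paper's stated justification for (iii) is defective and your direct computation actually supplies a proof there. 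Your route is a bare-hands Weingarten/genus expansion: the index bookkeeping, the constraint $v=w\circ\tau$ from the column deltas, and the count $N^{c(\sigma,\tau)}$ of free indices are all right. Two remarks. First, the ``crux'' inequality you flag as the main obstacle is in fact immediate: $c(\sigma,\tau)\le s+1$ because the graph has $s+1$ vertices, and $\#(\tau\sigma^{-1})\le s$ trivially, so $c+\#\le 2s+1$ with no genus-expansion subtlety; the only content is your (correct) rigidity analysis showing simultaneous equality forces $\sigma=\tau=e$. Second, for (iii) you need the precise leading coefficient $\operatorname{Wg}(mN,e)=(mN)^{-s}(1+o(1))$, which goes slightly beyond the paper's Theorem \ref{unitary:wg}(1) as literally stated but is standard from Collins--\'Sniady. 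On balance your approach is more self-contained (it bypasses the free-probability black box entirely), yields the sharper information that the non-diagonal terms in (ii) are $O(1)$ rather than merely $o(\sqrt{N})$, and repairs the gap in (iii); the paper's approach is shorter once the asymptotic-freeness machinery of Section 2.1 is taken for granted.
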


A similar statement holds for the first $mN \times N$ matricial column of $U$.

\begin{proof}
Parts (i), respectively (iii) are immediate consequences of  Theorem \ref{unitary:wg}, respectively of the identity $ U_k^\ast U_k =\text{Id}_N $.

 For part (ii), let us suppose that 
$ w =  { w_1 } \cdots { w_{ t - 1 } } w_t $ and $ v = {v_1} \cdots  { v_{ s - 1 } } { v_s } $, with $ w_1 \neq v_1 $.
Let $ e_{i, j} $ be the $ m \times m $ matrix with the $ i, j $ entry 1 and all other entries 0 and 
$ E_{i, j} =  e_{i, j} \otimes \operatorname{Id}_N \in \mathbb{C}^{ mN \times mN }$.
 Then for all $ 1\leq k \leq m $, we have that 
 $ \widetilde{U_k} = e_{ 1, 1 } \otimes U_k = E_{1, 1 } U E_{ k, 1 } $, hence
\begin{align}
\operatorname{Tr}\left( ( U ^w)^\ast U^v \right)& = \text{Tr} \left(
\widetilde{U_{ w_t }}^{\ast} \cdots \widetilde{U_{ w_1 }}^{\ast} \widetilde{U_{v_1}  }\cdots \widetilde{U_{v_s}  }
\right)\\ \nonumber
& \hspace{-1cm} =\text{Tr}
\left(
E_{ 1, w_t } U ^\ast E_{ 1, w_{t-1}} U^\ast\cdots E_{1, w_1}
 U^\ast E_{1, 1} U E_{v_1, 1}U \cdots E_{v_{s-1}, 1} U E_{v_s, 1}
\right).\label{eq:1}
\end{align}
To simplify the notation, we shall write
\begin{equation}\label{eq:2}
E^0_{i,j} = E_{i, j}- \delta_{i, j} \frac{1}{m} \text{Id}_{mN}.
\end{equation}
Note that $ \text{Tr}( E^0_{i, j } ) = 0  $, and that the ensemble $\{ e_{i, j}\otimes \text{Id}_N \} $ has the property from Remark \ref{remark:new1}, therefore  for all non-zero integers
 $ \alpha_0, \dots, \alpha_{n} $ and all indices
$ i, j, k, l, k_r, l_r \in \{ 1, \dots, m \}$ we have that
\begin{equation}\label{eq:3}
\lim_{ N\rightarrow \infty} \frac{1}{\sqrt{N}} \int_{\mathcal{U}(mN)} \text{Tr}
\left(
E_{i, j} U^{\alpha_0} E^0_{k_1, l_1}
 U^{\alpha_1} E^0_{k_2, l_2}\cdots E^0_{ k_{n}, l_{n} } U^{\alpha_n} E_{k, l}
\right) d\mathcal{U}_{mN} = 0,
\end{equation}
hence
 \begin{equation}\label{eq:4}
\lim_{ N\rightarrow \infty} \frac{1}{m\sqrt{N}} \int_{\mathcal{U}(mN)}
\text{Tr}
\left(
E_{ 1, w_t } U ^\ast \cdots E_{1, w_1}
 U^\ast E_{1, 1}^0
 U E_{v_1, 1}U \cdots  E_{v_s, 1}
\right)d\mathcal{U}_{mN} = 0,
\end{equation}
because, using (\ref{eq:2}) for $ E_{1, w_1}, \dots, E_{1, w_{t-1}} $ and $ E_{v_1, 1}, \dots, E_{v_{s-1}, 1} $, 
the integrand from (\ref{eq:4}) is a finite linear combination of integrands from (\ref{eq:3}).
\end{proof}

%%%%%%%%%%%%%%%%%%%%%%%%%%%%%%%%%%%%%%%%%%%%%%%

\subsection{Non-Commutative Functions and Taylor--Taylor Expansions}\label{section:22}

%The following definition for 
We define non-commutative functions %is similar to 
following \cite{vv-dkv}, see also \cite{vv-dkv2} and \cite{mp-vv}.

For $ \cV $ a (complex) linear space, we will denote by $\ncspace\cV $  the set
$ \coprod_{ n = 1}^\infty  \cV^{n \times n } $. 
For a subset $ \Omega $  of  $ \ncspace\cV $, we denote
$\Omega_n = \Omega \cap \cV^{n \times n }$; 
$\Omega$ is said to be a \emph{non-commutative set}
if for all positive integers $ m, n $ and all
$ X \in \Omega_m$ and $ Y \in \Omega_m$ we have that 
$ X \oplus Y \in \Omega_{m+n}$, where $ X \oplus Y $ is the block diagonal matrix from 
$ \cV^{ ( m + n ) \times ( m + n ) } $ with 
$ X $ and $ Y $ the block entries of the main diagonal and all other entries zero.

If $ \cV $ and $ \cW $ are two linear spaces and $ \Omega $ a non-commutative subset of $ \ncspace \cV$, a mapping
$ f : \Omega \lra \ncspace \cW $
is said to a \emph{non-commutative function}  if it satisfies the following conditions:
\begin{enumerate}
\item[$\bullet$]$ f ( \Omega_n) \subset \cW^{ n \times n } $ \  for all positive integers $ n $;
\item[$\bullet$]$ f ( X \oplus Y ) = f(X) \oplus f(Y) $ \ for all $ X, Y \in \Omega $;
\item[$\bullet$]if $ X \in \Omega_n $ and
 $ T \in \mathbb{C}^{n \times n } $ is invertible with 
$  T X T^{-1} \in \Omega $, then
 \[
 f ( T X T^{ -1} ) = T f( X ) T^{-1}.
\]
\end{enumerate}

Non-commutative  functions have strong regularity properties --- for an introduction to the basic theory
see \cite{vv-dkv}. Below we will mention only a particular form of 
the Taylor--Taylor expansion, as established in
%Section 7 of 
\cite[Chapter 7]{vv-dkv},  that will be extensively utilized in Section 3 of the present work.

Let $ \cV $ be a finite dimensional vector space with basis $ e_1, \dots, e_d $. For $ X \in \cV^{ N \times N }  $,
 there exist unique
 $ X_1, \dots, X_d \in \mathbb{C}^{ N \times N } $ 
such that 
$ X = X_1 e_1 + \ldots + X_d e _d .$
  If  $ w = w_1 \cdots w_t \in \cF_d $, we write $ X^w = X_{ { i_1 } } \cdots X_{ { i_t} } $.

Suppose that $ \Omega\subseteq \cV_{\text{nc} }  $
 is a non-commutative set  such that for
all $ N $, the set $ \Omega_N = \Omega \cap \cV^{ N \times N } $ 
is open, 
let $ \cW $ be a Banach space,
and suppose that 
$ f : \Omega \lra \cW_{ \text{nc} } $ 
is a non-commutative function
locally bounded on slices separately in every matrix dimension,
that is 
   for all positive integers $ N $, all $X \in \Omega_N$,
and all $ Y \in \cV^{ N \times N } $,
there exists $ \varepsilon > 0 $ such that 
the function
 $ t \mapsto f ( X + t Y ) $ is bounded for $  | t | < \varepsilon .$
Let $ b \in \Omega_1$, and 
for $ N $ a positive integer, define the set
\[
\Upsilon (\operatorname{Id}_N b)=\{ X\in \Omega_N : \  \operatorname{Id}_N b + t ( X - \operatorname{Id}_N \cdot b ) \in \Omega_N\  \text{for all } 
t \in \mathbb{C} \ \text{such that } | t | \leq 1 \}
\]
(this is the maximal subset of $\Omega_N$ that is complete circular around $\operatorname{Id}_N b$).
Then %Theorem 7.2 from 
\cite[Theorem 7.2]{vv-dkv} (see also Theorems 7.8 and 7.10 there)
states that for 
%all positive integers  $n $  and 
all $ X \in \Upsilon (\operatorname{Id}_N b) $ 
\begin{equation}\label{eq:nc1}
f(X)=\sum_{ l = 0 }^\infty \left( \sum_{ | w | = l } ( X -  \operatorname{Id}_N b )^{ w } \otimes f_{ w }  \right),
\end{equation}
where the series converges absolutely and uniformly (in fact, normally) 
on compacta of  $ \Upsilon (\operatorname{Id}_N b) .$
(The Taylor--Taylor coefficients $f_{ w } \in \cW$ are given by
$f_w = \Delta_R^{w^\top} f (b,\ldots,b)$, 
where $\Delta_R^{w^\top}$ is the higher order partial difference--differential operator corresponding 
to $w \in \cF_d$.)

%%%%%%%%%%%%%%%%%%%%%%%%%%%%%%%%%%%%%%%%%%%%%%%%%%%%%

\subsection{Operator space structures on $ \mathbb{C}^m $}
 An operator space structure on a linear space $ \cV $ is given (see \cite[Proposition 2.3.6]{effros-ruan})
by a family of norms $ \{  \| \cdot \|_n \} _{ n > 0 }$,  such that each $ \| \cdot \|_n $   is a norm on $\cV^{ n \times  n } $ and, for all $ X \in \cV^{ n \times n }$, 
$ Y \in \cV^{ m \times m } $, $ T, S \in \mathbb{C}^{ n \times n } $, we have that:
\begin{itemize}
\item[$\bullet $ ]  $ \| X \oplus Y \|_{ n + m } = \text{max}\{ \| X \|_n , \| Y \|_m \}         
$;
\item[$\bullet$ ] $ \| T X S \|_n \leq \| T \| \| X \|_n \| S \|$, where $ \| \cdot \| $ denotes the usual operator norm of complex matrices.
\end{itemize}

We will consider the operator spaces structures on $ \mathbb{C}^{ m } $
 given by the  $\| \cdot \|_{ \infty } $, $ \| \cdot \col $, and $ \| \cdot \|_{ \text{row} } $,
where, for 
$ X = ( X_1, \dots, X_m ) \in  ( \mathbb{ C}^{ N \times N } )^{ m } \simeq ( \mathbb{ C}^ m )^{ N \times N } $
and $ \| \cdot \| $ the usual operator norm in $ \mathbb{C}^{ N \times N } $
\begin{align*}
&
\| X \|_{ \infty }
 = 
\text{max} \{ \| X_1 \|, \dots,  \| X_m \| \},
\\
&
\| X \col 
 =
\| \sum_{ i = 1}^m X_i^\ast X_i \|^{ \frac{ 1 }{ 2 } },
\\
&
\| X \row  
=
\| \sum_{ i = 1}^m X_i X_i^\ast  \|^{ \frac{ 1 }{ 2 } }.
\end{align*}

 \noindent 
For the norm $ \| \cdot \|_{ \infty } $, the non-commutative unit ball is the non-commutative polydisc
\[
( \mathbb{D}^m )_{ \text{nc} } 
=
\coprod_{ N =1 }^\infty \{ ( X_1, \dots, X_m ) \in ( \mathbb{C}^{ N \times N } )^m : \| X_j \| < 1,\  j = 1, \dots, m \}.
\]

\noindent
For the norms $ \| \cdot \col $, respectively $ \| \cdot \row $, the non-commutative unit balls are given by
\[
(  \mathbb{B}^m )_{ \text{nc} }
=
 \coprod_{ N =1}^\infty
\{ ( X_1, \dots, X_m ) \in ( \mathbb{C}^{ N \times N } )^m : \sum_{ i =1}^m X_i^\ast X_i <  I_N  \},
\]
respectively by
\[
(  \mathbb{B}^m_{ \text{row} }  )_{ \text{nc} }
=
 \coprod_{ N =1}^\infty
\{ ( X_1, \dots, X_m ) \in ( \mathbb{C}^{ N \times N } )^m : \sum_{ i =1}^m X_i  X_i^\ast  <  I_N \}.
\]

Identifying the components from $ (\mathbb{C}^{ N \times N } )^m $
of
$  ( \mathbb{D}^m )_{ \text{nc} }  $, 
 $ (  \mathbb{B}^m )_{ \text{nc} } $,
 respectively 
 $ (  \mathbb{B}^m_{ \text{row} }  )_{ \text{nc} } $
with the corresponding subsets of $ \mathbb{C}^{ m N^2 } , $
the Shilov boundaries for the commutative algebras of complex analytic functions 
in $ m N^2 $ variables, as shown in \cite[Example 1.5.51]{upmeier}, are 
$ \mathcal{U}(N)^m $ in the case of $  ( \mathbb{D}^m )_{ \text{nc} }  $,
respectively the set of all isometries and coisometries of 
$ \mathbb{C}^{ mN \times N } $ for $ (  \mathbb{B}^m )_{ \text{nc} } $,
respectively of 
$ \mathbb{C}^{  N \times mN } $ for $ (  \mathbb{B}^m_{ \text{row} }  )_{ \text{nc} } $. 
Since 
$ \mathbb{C}^{ mN \times N } $
does not have any coisometries, it follows that for the case of 
 $ (  \mathbb{B}^m )_{ \text{nc} } $
the above Shilov boundary is
\begin{align*}
 \dBN &= \{ ( X_1, \dots, X_m ) \in ( \mathbb{C}^{ N \times N } )^m : \sum_{ i = 1 }^m X_i^\ast X_i = I_N \}\\
 &=  \{ ( X_1, \dots, X_m) \in ( \mathbb{C}^{ N \times N } )^m :
  \text{there exists some } U \in \mathcal{U}(mN) \\
 & \hspace{ 4 cm}  \text{  such that  } 
 U  \left[ I_N \ 0 \ \dots  \ 0   \right]^T 
 = \left[ X_1 \ \dots \ X_m  \right]^T
\},
\end{align*}
where, for $ A \in \mathbb{C}^{ n \times m } $, the notation $ A^T $ stand for the matrix transpose of $ A $.

Similarly, since 
$ \mathbb{C}^{ N \times mN } $
does not have any isometries, the Shilov boundary for the case of
$ (  \mathbb{B}^m_{ \text{row} } )_{ \text{nc} } $
is
\[
\left(\partial_S ( \mathbb{B}^m_{ \text{row} })_{\text{nc}}\right)_N
=
\{ ( X_1, \dots, X_m ) \in ( \mathbb{C}^{ N \times N } )^m : \sum_{ i = 1 }^m X_i  X_i^\ast = I_N \}.
\]

To simplify the writing in the next section,  we will denote
 $ \mathcal{U}(N)^m $  by  $ \dDN $.
 The natural  measure on $\dDN $ is the $m$-fold product measure $\mu_N $ of the Haar measure on 
$ \mathcal{U}(N) $. 
Corollary \ref{cor:24} then yields that for any  $ v, w \in \cF_m $ we have:
\begin{align}
& \int_{ \dDN }
 \text{Tr}\left( \left( X^w \right)^\ast X^v \right)  d\mu_N =0
  \ \text{if}\ | v | \neq | w | \label{eq:021} \\
& \lim_{ N \rightarrow \infty } \int_{ \dDN } 
\frac{1}{N}
\text{Tr}\left( \left( X^w \right)^\ast X^v \right)  d\mu_N
 = \delta_{ v, w }. \label{eq:022}
\end{align}

For the case of $ (  \mathbb{B}^m )_{ \text{nc} } $, note that
 the group $ \mathcal{U} ( m N ) $ acts transitively on
 $ \dBN $ via
  $
   \left[  X_1 \dots X_m \right]^T \mapsto 
   U \cdot \left[ X_1 \dots X_m \right]^T
  $. 
  Moreover, denoting
 \[
  H(m, N) = \{ I_n \oplus U : U \in \mathcal{U} ( ( m-1) N ) \},
  \]
 we have that $ H ( m, N ) $ is a compact subgroup of
 $ \mathcal{U} (mN ) $ which is the stabilizer 
 of
  $ \left[ I_N \ 0 \ \dots \ 0 \right]^T \in \dBN $. Hence
 $ \dBN $ is isomorphic to 
 $ \mathcal{U}( mN ) / H(m,N ) $
 and (see \cite[Theorem 2.49]{folland}) there exists a unique
 Radon measure $ \nu_N $ of mass 1 on $ \dBN $ invariant under the action of $ \mathcal{U} ( mN ) $
 and  for any continuous function 
 $ f : \mathcal{U}( mN) \lra \mathbb{C} $
 we have that
 \begin{equation}\label{eq:folland}
 \int_{ \mathcal{U}(mN)} f (U) d\mathcal{U}_{ mN}(U) = 
 \int_{ \dBN  } \int_{  H(m, N) } f(U V )
 d\mathcal{U}_{ ( m -1 ) N }( V ) d\nu_N( U H(m,N ) ).
 \end{equation}

For $ 1 \leq i \leq mN $ and $ 1 \leq j \leq N $  and 
$ u_{ i, j } : \mathcal{U}( mN ) \rightarrow \mathbb{C} $ 
as defined in Section \ref{haar}, a simple verification gives that for all 
$ U \in \mathcal{U}(mN)$ and $ V \in H( m, N ) $
\begin{equation}\label{eq:023}
 u_{ i , j } ( U ) = u_{ i,  j } ( U \cdot V ).
\end{equation}
Fix now  $ f \in \text{Alg}\{ u_{ i, j }, \overline{ u_{ i, j } } : 1 \leq i \leq mN, 1 \leq j \leq N \}$.
 For all $ U\in\mathcal{U}(mn) $, equation (\ref{eq:023}) implies 
\begin{equation}\label{eq:024}
 \int_{  H(m, N) } f(U V )
d\mathcal{U}_{ ( m -1 ) N }( V ) =
 \int_{ H( m, N ) } f ( U ) d\mathcal{U}_{ ( m -1 ) N }( V )  = f( U ).
\end{equation}
Define $ \widehat{f} $ on $ \dBN $ via $ \widehat{f} ( U H ( m, N ) ) = f( U) $. 
 From (\ref{eq:023}), $ \widehat{f} $ is well-defined. Moreover, equations \eqref{eq:folland} and \eqref{eq:024} gives
\begin{equation}\label{eq:025}
\int_{ \dBN } \widehat{f} ( U H(m, N ) ) d\nu_N (U H( m, N ) ) = \int_{ \mathcal{U}(mN ) } f ( U ) d\mathcal{U}_{mN} ( U ).
\end{equation}
Hence, Corollary \ref{cor:2} (more precisely, its analogue for the first $mN \times N$ matricial column)
implies that for all $ v, w \in \cF_m $ we have:
\begin{align}
&\int_{ \dBN } \text{Tr}\left( \left( X^w \right)^\ast X^v \right) 
d\nu_N(X) = 0\ \text{if} \  | v | \neq | w |\label{eq:031}\\
&\lim_{ N \rightarrow \infty } \int_{ \dBN } \frac{1}{N}\text{Tr} \left( 
 \left( X^w \right)^\ast X^v \right) d\nu_N(X)
= \delta_{ v, w } \frac{1}{m^{ | v |  } }.\label{eq:032}
\end{align}

For the case of $(\mathbb{B}^m_{ \text{row}})_{\text{nc}}$,  a similar argument as above gives that there exists $ \nu^\prime_N $, 
a unique Radon measure 
of mass 1   on 
$\left(\partial_S( \mathbb{B}^m_{ \text{row} })_{\text{nc}}\right)_N$ invariant under the action
of $ \mathcal{U}(mN) $, 
and that the pair $ ( \left(\partial_S( \mathbb{B}^m_{ \text{row} })_{\text{nc}}\right)_N, \nu^\prime_N ) $ 
also satisfies the equalities (\ref{eq:031}) and (\ref{eq:032}).

%%%%%%%%%%%%%%%%%%%%%%%%%%%%%%%%%%%%%%%%%%%%%

\section{Main results}

The present section will  address properties of certain $ H^2 $ Hardy spaces 
associated to the non-commutative unit balls for the operator norms
 $ \| \cdot \|_\infty $ and $ \| \cdot \col $ on $ \mathbb{C}^m $. 
 Since both 
 $ ( \dBn, \nu_n ) $ 
 and 
 $  (( \partial_S( \mathbb{B}^m_{ \text{row} })_{\text{nc}})_n, \nu^\prime_n ) $
 satisfy (\ref{eq:031}) and (\ref{eq:032}), similar results
 to the case of $ \| \cdot \col $ can be stated for the setting of
  $ \| \cdot \row $.
  
  For $ \Omega $ either $ (\mathbb{B}^m)_{\text{nc}} $ or $ (\mathbb{D}^m)_{\text{nc}} $, consider the algebras
  \begin{align*}
  \cA_{ \Omega } =&\{  f:  \Omega \lra \mathbb{C}_{ \text{nc } } : \ 
   f  \text{ is a non-commutative function, locally bounded on slices}\\  
  & \hspace{6 cm}  \text{   separately in every matrix dimension} \}.
  \end{align*}
 Equation  (\ref{eq:nc1}) gives that for all $ f \in \cA_\Omega $
 there exists a family of complex numbers
  $ \{ f_w \}_{ w\in\cF_m} $
   with $ f_{\emptyset} = f( 0 ) $, such that for all $ X \in \Omega $
  \begin{equation}\label{eq:nc2}
  f(X) = \sum_{ l = 0 }^\infty\big( \sum_{ w\in \cFml } X^w f_w \big),
  \end{equation}
where, for any positive integer $N$, the series converges absolutely and uniformly on compacta of $\Omega_N = \Omega \cap \mathbb{C}^{ N \times N }$.

For $ f \in \cA_\Omega $ as above and $ X \in (\mathbb{C}^m)_{ \text{nc} } $, we will denote
 $ \displaystyle  f^{ [ l ] } ( X ) = \sum_{ w \in \cFml  }  X^w f_w $. Remark that 
$ f^{ [ l ]} (rX) = r^l \cdot f^{ [ l ] } ( X ) $ for any real $ r $; also, if $ l \neq p $, then 
$$ \displaystyle
\int_{ \partial(\Omega, N )  }
 \text{Tr}\left( ( f^{ [ l ] } (X) )^\ast f^{ [ p ] } (X ) \right) d\omega_N = 0.
$$

\begin{thm}\label{thm:1.a} \emph{(i)} If $ f \in \cA_{ \mathbb{D}^m } $ and 
$ r \in (0, 1) $, then: 
\begin{align*}
f_w  & =
\lim_{ N \lra \infty } \frac{ 1 } { r^{ | w | } }
\int_{\dDN} \frac{1 } { N } \textrm{Tr}  \left( ( X^w )^\ast f( r X )  \right) d \mu_N\\
& =
\lim_{ r \lra 1^- } \lim_{ N \lra \infty } 
\int_{ \dDN  } \frac{ 1 }{ N } \text{Tr} \left( ( X^w )^\ast f( r X )  \right) d \mu_N.
\end{align*}
\emph{(ii)} If $ f \in \cA_{ \mathbb{B}^m } $ and $ r \in (0, 1 ) $, then
\begin{align*}
f_w  & =
\lim_{ N \lra \infty } \frac{ 1 } { r^{ | w | } }\cdot {m^{ | w |  } }
\int_{ \dBN} \frac{1 } { N } \textrm{Tr}  \left( ( X^w )^\ast f( r X )  \right) d \nu_N\\
& =
\lim_{ r \lra 1^- } \lim_{ N \lra \infty } 
 {m^{ | w | } }
\int_{ \dBN  } \frac{ 1 }{ N } \text{Tr} \left( ( X^w )^\ast f( r X )  \right) d \nu_N.
\end{align*}
\end{thm}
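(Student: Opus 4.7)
The plan is to substitute the Taylor--Taylor expansion (\ref{eq:nc2}) for $f$ into the integrand, swap summation and integration, and then extract the unique surviving term using the orthogonality relations (\ref{eq:021})--(\ref{eq:022}) in part (i), respectively (\ref{eq:031})--(\ref{eq:032}) in part (ii).

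For part (i), fix $N \geq 1$ and $r \in (0,1)$. The set $r \cdot \dDN = \{rX : X \in \dDN\}$ is a compact subset of $(\mathbb{D}^m)_{\text{nc}} \cap (\mathbb{C}^{N \times N})^m$, since each entry $X_j$ of $X \in \mathcal{U}(N)^m$ satisfies $\|rX_j\|=r<1$. By (\ref{eq:nc1}) applied with $b=0$, the series $f(rX) = \sum_{l \geq 0} r^l f^{[l]}(X)$ converges normally on $r \cdot \dDN$, hence uniformly in $X \in \dDN$. Since $Y \mapsto \frac{1}{N}\Tr((X^w)^* Y)$ is continuous, uniform convergence is preserved after multiplying by $(X^w)^*/N$ and taking $\Tr$, so I can swap sum and integral to obtain
\begin{equation*}
\int_{\dDN} \frac{1}{N} \Tr\bigl((X^w)^* f(rX)\bigr)\, d\mu_N
= \sum_{l=0}^{\infty} r^l \sum_{v \in \cFml} f_v \cdot \frac{1}{N}\int_{\dDN} \Tr\bigl((X^w)^* X^v\bigr)\, d\mu_N.
\end{equation*}
By (\ref{eq:021}) every summand with $|v| \neq |w|$ vanishes for every $N$, so the right-hand side reduces to the finite sum over $v$ of length $|w|$. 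Letting $N \to \infty$ term by term and using (\ref{eq:022}) collapses this to $r^{|w|} \sum_{|v|=|w|} f_v \delta_{v,w} = r^{|w|} f_w$. Dividing by $r^{|w|}$ yields the first identity in (i), and letting $r \to 1^-$ in $r^{|w|} f_w$ yields the second.

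Part (ii) follows by exactly the same argument, replacing $\dDN,\mu_N$ by $\dBN,\nu_N$ and (\ref{eq:021})--(\ref{eq:022}) by (\ref{eq:031})--(\ref{eq:032}). The factor $1/m^{|v|}$ in (\ref{eq:032}) is precisely what forces the compensating multiplicative constant $m^{|w|}$ appearing in the statement.

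The only step requiring genuine care is the interchange of sum and integral, but this is not delicate: for each fixed $N$ the argument $rX$ ranges over a single compact subset of the open domain $\Omega_N$, so normal convergence of the Taylor--Taylor series is sufficient. It is also important that (\ref{eq:021}) and (\ref{eq:031}) annihilate the unwanted terms at every finite $N$, so that the limit $N \to \infty$ is taken against a \emph{finite} sum and no tail estimate uniform in $N$ is required.
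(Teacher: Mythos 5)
Your proposal is correct and follows essentially the same route as the paper: substitute the Taylor--Taylor expansion \eqref{eq:nc2}, interchange sum and integral using uniform convergence on $r\,\dDN$ (respectively $r\,\dBN$), kill the terms with $|v|\neq|w|$ at every finite $N$ via \eqref{eq:021} (respectively \eqref{eq:031}), and pass to the limit in the remaining finite sum using \eqref{eq:022} (respectively \eqref{eq:032}). The paper's own proof is a terser version of exactly this argument, so no further comparison is needed.
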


\begin{proof} For any positive integer $ N $, relations (\ref{eq:021}) and (\ref{eq:nc2}) give
(notice that we can interchange the integral and the infinite sum since the convergence
is uniform on $r \dDN$)  
\begin{align*}
\int_{ \dDN } \text{Tr} \left( ( X^w )^\ast f (r X )  \right) d\mu_N
& =
 \sum_{ l = 0 }^\infty \left( \sum_{ v\in \cFml } \int_{ \dDN }
 \text{Tr}\left( (  X^w )^\ast X^v \right) \cdot r^{ | v | } f_{ v } d\mu_N \right)\\
& = 
\sum_{ \substack{ v \in \cF_m \\ | v | = | w | } }
r^{ | v | } f_{ v }\cdot
 \int_{ \dDN }
\text{Tr}\left( (  X^w )^\ast X^v \right)   d\mu_N,
\end{align*}
and the equalities from {(i)} follow from equation (\ref{eq:022}). 

The argument for {(ii)} is similar, using equations (\ref{eq:031}), (\ref{eq:nc2}) and (\ref{eq:032}).
\end{proof}

\begin{defn}\label{defn:omega1}
 For $ (\Omega, d\omega_N) $ 
either $ ((\mathbb{B}^m)_{\text{nc}}, d\nu_N)$ or $ ( (\mathbb{D}^m)_{\text{nc}}, d\mu_N)  $, we define
\[
 \displaystyle H^2( \Omega) =  
\{ f \in \cA_\Omega :  S( f ) =
 \sup_N  \sup_{ r < 1 }
 \int_{  (\partial_S \Omega)_N }
 \frac{ 1} { N } \text{Tr} \left(  f ( r X )^\ast f ( r X )  \right) d \omega_N < \infty
\}. 
\] 
\qed
\end{defn}

\begin{thm}\label{thm:3.3}
\emph{(i)}  If $ f \in H^2 (  ( \mathbb{D}^m )_{ \text{nc} } ) ,  $
 then 
 $\displaystyle  \sum_{ l = 0 }^\infty ( \sum_{  w \in \cFml  } | f_w |^2 ) < \infty $.

 \emph{(ii)} If $ f \in H^2 (  ( \mathbb{B}^m )_{ \text{nc} } ) ,$  then 
 $\displaystyle  \sum_{ l = 0 }^\infty ( \frac{1}{m^{ l  } } 
\sum_{ w \in \cFml } | f_w |^2 ) < \infty $.
\end{thm}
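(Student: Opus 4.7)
The strategy is to substitute the Taylor--Taylor expansion into the integral defining $S(f)$, use the vanishing of mixed-degree cross-terms noted just before Definition~3.2 to diagonalize the expression into a sum over homogeneous pieces, identify each homogeneous piece asymptotically via the relations (2.2.6)/(2.2.7) for the polydisc and (2.2.8)/(2.2.9) for the ball, and finally extract the desired $\ell^2$-style bound from non-negativity by truncating and inverting the order of the limits $N\to\infty$, $r\to 1^-$, and the truncation level $L\to\infty$.

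Concretely, fix $r\in(0,1)$ and a positive integer $N$. Since $r\cdot (\partial_S \Omega)_N$ is a compact subset of $\Omega_N$, the Taylor--Taylor series $f(rX)=\sum_{l\ge 0} r^l f^{[l]}(X)$ converges uniformly on it, so one may square and integrate term-by-term to obtain
\[
\int_{(\partial_S \Omega)_N}\!\!\frac{1}{N}\textrm{Tr}\bigl(f(rX)^\ast f(rX)\bigr)\,d\omega_N
 = \sum_{l,p\ge 0} r^{l+p}\!\int_{(\partial_S \Omega)_N}\!\!\frac{1}{N}\textrm{Tr}\bigl((f^{[l]}(X))^\ast f^{[p]}(X)\bigr)\,d\omega_N.
\]
The orthogonality between different word lengths (equations (2.2.5) and (2.2.7) in the excerpt) forces the $l\ne p$ contributions to vanish, leaving
\[
\int \tfrac{1}{N}\textrm{Tr}\bigl(f(rX)^\ast f(rX)\bigr)\,d\omega_N
 = \sum_{l\ge 0} r^{2l}\, I_{N,l}, \qquad I_{N,l}:=\!\int\! \tfrac{1}{N}\textrm{Tr}\bigl((f^{[l]}(X))^\ast f^{[l]}(X)\bigr)\,d\omega_N \ge 0.
\]
For each fixed $l$ the integrand $(f^{[l]}(X))^\ast f^{[l]}(X)=\sum_{v,w\in\cFml}\overline{f_v}f_w (X^v)^\ast X^w$ is a \emph{finite} combination, so one may commute $\lim_N$ past the sum and invoke (2.2.6) in the polydisc case or (2.2.8) in the ball case, obtaining $\lim_N I_{N,l}=\sum_{w\in\cFml}|f_w|^2$ for $\Omega=(\mathbb{D}^m)_\text{nc}$ and $\lim_N I_{N,l}=m^{-l}\sum_{w\in\cFml}|f_w|^2$ for $\Omega=(\mathbb{B}^m)_\text{nc}$. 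Finally, using $I_{N,l}\ge 0$, for every $L\in\mathbb{N}$ one has $\sum_{l=0}^L r^{2l} I_{N,l}\le S(f)$; passing successively $N\to\infty$, then $r\to 1^-$, then $L\to\infty$ yields $\sum_l \sum_{w\in\cFml} |f_w|^2 \le S(f)$ in case (i) and the $m^{-l}$-weighted analogue in case (ii).

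The main technical points to be careful about are precisely these interchanges. Justifying the term-by-term integration in the first display rests on uniform convergence of the Taylor--Taylor series on $r\cdot(\partial_S\Omega)_N$, which is a compact subset of $\Omega_N$ where convergence holds normally by (2.2.2); this is the essential use of the condition $r<1$. The pointwise limit in $N$ inside a finite sum is then routine. The ordering of the three outer limits is non-trivial: one cannot hope to push $N\to\infty$ through an infinite sum directly, and it is the positivity of each $I_{N,l}$ combined with the finite truncation at $L$ that makes the argument work. This is also where the non-completeness of $H^2$ hinted at in the introduction surfaces: we obtain only a one-sided comparison, $\sum \cdots \le S(f)$, and not a Parseval identity.
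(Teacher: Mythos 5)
Your proof is correct and follows essentially the same route as the paper's: expand $f(rX)$ into homogeneous pieces, use the exact vanishing of mixed-degree terms to diagonalize, exploit positivity of each homogeneous integral to truncate at level $L$ and bound by $S(f)$, pass $N\to\infty$ term by term in the finite sum via the asymptotic orthogonality relations, and then let $r\to 1^-$ and $L\to\infty$. The paper's decomposition into $a_l + b_{l,N}$ with $\lim_N b_{l,N}=0$ is just a renaming of your $I_{N,l}$ and its limit, so there is nothing substantive to distinguish the two arguments.
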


\begin{proof}
 For  (i), note first that if $ r \in ( 0, 1 ) $, 
  $ f \in H^2 ( ( \mathbb{D}^m ) _{ \text{nc} } )$
and $ X \in \dDN $,  then, as in equation (\ref{eq:nc2}), we have that 
  $ \displaystyle f ( r X ) = \sum_{ l =0 }^\infty r^l f^{ [ l ] } (X) $  and the series is absolutely convergent.

 Therefore equation (\ref{eq:021})  implies that 
\begin{align*}
\int_{ \dDN } \frac{1}{N}& \text{Tr} ( f (r X)^\ast f( r X ) ) d \mu_N =
 \sum_{ l = 0 }^\infty  r^{ 2 l }
 \int_{ \dDN } \frac{1}{N} \text{Tr} ( 
f^{[l]}(X)^\ast f^{[l]}( X ) ) d\mu_N \\
&\hspace{1cm}= \sum_{ l = 0 }^\infty  
r^{ 2 l } [ a_l + b_{ l, N } ]
\end{align*}
where  
$ \displaystyle a_l = \sum_{ | w | = l } | f_w |^2 ,$ and 
\[
 \displaystyle b_{ l, N } = 
\sum_{  | v | = | w | = l  } \overline { f_ w } f_v
 \cdot \int_{ \dDN } \frac{1}{N} \text{Tr} (  ( X^w)^\ast X^v ) d\mu_N  - a_l.
\]

 Since 
 $ f \in H^2( ( \mathbb{D}^m )_{ \text{nc} })$, according to Definition \ref{defn:omega1}, we have that
  \[
  \sup_{ N } \sup_{ r < 1 } \sum_{ l = 0 }^\infty r^{ 2 l } \cdot ( a_l + b_{ l, N } ) = S(f) < \infty.
  \]

For each $ X $ and each $ l, N $, the matrix
$ f^{[ l ] }(X)^\ast f^{ [ l ] }(X) $ is positive, 
henceforth $ a_l + b_{l, N } \geq 0 $ thus, for each positive integer $ L $ and each $ r \in (0, 1) $,
\[
\sum_{ l =0}^L r^{2l} (a_l + b_{l, N}) \leq S(f).
\]

 \noindent But relation (\ref{eq:022}) implies that
  for each $ l $, 
 $ \displaystyle \lim_{ N \rightarrow \infty } b_{ l, N }=0 ,$  
  thus 
  $ \displaystyle \sum_{ l =0}^L r^{2l}a_l \leq S(f) $ for each $ L $ and each $ r $, henceforth
 $\displaystyle \sup_{ r < 1 } \sum_{ l = 0 }^\infty r^{ 2 l } a_l < \infty $, and, 
since $ a_l \geq 0$, we obtain $ \displaystyle \sum_{ l = 0 }^\infty a_l < \infty $.

The argument for part (ii) is analogous, utilizing equations (\ref{eq:031})  and (\ref{eq:032}).
\end{proof}

\begin{prop}\label{prop:34}
 For $ f, g \in H^2( \Omega) $ and $ r > 0 $, define
 \[
  \varphi_{ f, g }(r) =
   \lim_{ N \lra \infty }
    \int_{  (\partial_S \Omega)_N }
    \frac{1}{N} \Tr \left( g( rX)^\ast f( rX ) \right)
d\omega_N.
  \]
  
   Then  $ \varphi_{ f, g } $ exists for $ r $ small and equals 
   $  \displaystyle \sum_{ l=0}^\infty  r^{ 2l} \big(  \sum_{ w \in \cFml } \overline{ g_w} {f_w} \big) $ for $ \Omega =   ( \mathbb{D}^m )_{ \text{nc} }  $, respectively equals
   $  \displaystyle \sum_{ l=0}^\infty  r^{ 2l} \big(  \sum_{ w \in \cFml } \overline{ g_w} {f_w} \big) $ for $ \Omega =   ( \mathbb{B}^m )_{ \text{nc} }  $;
     in both cases the latest extends analytically  on $ (0, 1) $ and continuously on $ [ 0, 1 ] $ to a function that we will denote by $ \widetilde{ \varphi_{ f, g}}$.
\end{prop}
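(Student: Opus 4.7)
The plan is to substitute the Taylor--Taylor expansions (\ref{eq:nc2}) for $f(rX)$ and $g(rX)$, integrate term by term, apply the orthogonality relations on the distinguished boundary to kill all cross terms of mismatched length, and finally pass the limit $N\to\infty$ through the surviving series using a uniform-in-$N$ dominant bound built from Theorem~\ref{thm:3.3}. For the polydisc, $f(rX)=\sum_{l\ge 0}r^l f^{[l]}(X)$ and $g(rX)=\sum_{k\ge 0}r^k g^{[k]}(X)$ converge absolutely and uniformly on $r\dDN$, so (\ref{eq:021}) gives
\[
\int_{\dDN}\tfrac{1}{N}\Tr\bigl(g(rX)^{\ast}f(rX)\bigr)\,d\mu_{N}
=\sum_{l\ge 0}r^{2l}\sum_{|w|=|v|=l}\overline{g_{w}}\,f_{v}\,I_{N}(w,v),
\]
where $I_{N}(w,v):=\int_{\dDN}\tfrac{1}{N}\Tr((X^{w})^{\ast}X^{v})\,d\mu_{N}$; an analogous identity, with $\nu_{N}$ in place of $\mu_{N}$ and (\ref{eq:031}) replacing (\ref{eq:021}), holds on $\dBN$.

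Next I would establish a uniform bound on $|I_{N}(w,v)|$. On either distinguished boundary, each $X_{j}$ is a contraction (on $\dDN$ they are unitaries, on $\dBN$ one has $X_{i}^{\ast}X_{i}\le I$), so $\|X^{v}\|\le 1$ and Cauchy--Schwarz gives $|I_{N}(w,v)|\le 1$ for every $N$. Hence, with $a_{l}(f):=\sum_{|w|=l}|f_{w}|^{2}$ and similarly for $g$, an application of Cauchy--Schwarz in the word variable bounds the inner double sum by $m^{l}\sqrt{a_{l}(f)a_{l}(g)}$. For the polydisc case $a_{l}(f), a_{l}(g)$ are summable by Theorem~\ref{thm:3.3}(i), hence bounded, and so the bound
\[
r^{2l}\Bigl|\sum_{|w|=|v|=l}\overline{g_{w}}f_{v}\,I_{N}(w,v)\Bigr|
\le (r^{2}m)^{l}\sqrt{a_{l}(f)a_{l}(g)}
\]
is summable uniformly in $N$ provided $r^{2}m<1$. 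For the ball case, Theorem~\ref{thm:3.3}(ii) gives $\sum_{l}m^{-l}a_{l}(f)<\infty$, hence by Cauchy--Schwarz $m^{-l}\sqrt{a_{l}(f)a_{l}(g)}$ is bounded, yielding a geometric dominant of the form $C(r^{2}m^{2})^{l}$ for $r<1/m$. This justifies interchanging $\lim_{N\to\infty}$ with the sum.

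With the dominant in hand, the pointwise limits (\ref{eq:022}) and (\ref{eq:032}) give $I_{N}(w,v)\to\delta_{w,v}$ on the polydisc and $I_{N}(w,v)\to\delta_{w,v}\,m^{-|v|}$ on the ball, so the dominated convergence in the counting measure over $l$ yields
\[
\varphi_{f,g}(r)=\sum_{l\ge 0}r^{2l}\!\sum_{w\in\cFml}\!\overline{g_{w}}f_{w}
\qquad(\text{polydisc}),
\]
and the analogous identity weighted by $m^{-l}$ in the ball case. Finally, for the extension to $(0,1)$ and $[0,1]$, I would write $c_{l}:=\sum_{|w|=l}\overline{g_{w}}f_{w}$ (or its $m^{-l}$ weighted analogue) and note that by Cauchy--Schwarz in the word variable $|c_{l}|\le\sqrt{a_{l}(f)a_{l}(g)}$ (respectively $m^{-l}\sqrt{a_{l}(f)a_{l}(g)}$ for the ball). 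A second Cauchy--Schwarz, this time in the variable $l$ and using Theorem~\ref{thm:3.3}, shows that $\sum_{l}|c_{l}|<\infty$, so the power series $\sum_{l}r^{2l}c_{l}$ converges absolutely and uniformly on $[0,1]$, defining $\widetilde{\varphi_{f,g}}$ as a continuous function on $[0,1]$ that is analytic on $(0,1)$.

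The main obstacle is the uniform control needed to exchange $\lim_{N\to\infty}$ with the infinite sum over $l$. The clean $|I_{N}(w,v)|\le 1$ bound together with Theorem~\ref{thm:3.3} is exactly what makes this possible, but the permissible range of $r$ obtained this way ($r<1/\sqrt{m}$ for the polydisc, $r<1/m$ for the ball) is smaller than the final range $[0,1]$ on which $\widetilde{\varphi_{f,g}}$ lives; the passage to the full interval is achieved only a posteriori, by means of the absolute convergence of $\sum_{l}r^{2l}c_{l}$ on $[0,1]$ established from the $H^{2}$ norm bounds in the last step.
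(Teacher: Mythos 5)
Your proposal is correct and follows essentially the same route as the paper: expand $f(rX)$ and $g(rX)$ via the Taylor--Taylor series, use the orthogonality relations (\ref{eq:021})/(\ref{eq:031}) to kill terms of mismatched length, produce a geometric dominant uniform in $N$ (valid for $r$ small) from Theorem~\ref{thm:3.3} to justify interchanging $\lim_{N\to\infty}$ with the sum over $l$, evaluate the termwise limits via (\ref{eq:022})/(\ref{eq:032}), and obtain the extension to $[0,1]$ from the absolute convergence of $\sum_l r^{2l}c_l$ guaranteed by Theorem~\ref{thm:3.3}. The only (immaterial) difference is that you build the dominant by bounding the individual integrals $|I_N(w,v)|\le 1$ and applying Cauchy--Schwarz to the coefficient sums, whereas the paper bounds $\|f^{[l]}(rX)\|$ and $\|g^{[l]}(rX)\|$ in operator norm via $\sup_{|w|=l}|f_w|$, yielding slightly different (but equally sufficient) smallness thresholds for $r$.
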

\begin{proof}

 Since $ f, g \in H^2( \Omega) $, equations (\ref{eq:021}) and (\ref{eq:031}) give that
\begin{equation}\label{eq:prop34}
\int_{  (\partial_S \Omega)_N }
    \frac{1}{N} \Tr \left( g( rX)^\ast f( rX ) \right)
d\omega_N
=
\int_{  (\partial_S \Omega)_N } \sum_{ l=0 }^\infty
\frac{1}{N} \Tr \left( g^{[l]}(rX)^\ast f^{[l]}( rX ) \right)
d\omega_N.
\end{equation}

On the other hand
\begin{align*}
\frac{1}{N} \Tr \left( g^{[l]}(rX)^\ast  \right.&  \left.f^{[l]}( rX )\right) 
 \leq   \| g^{ [ l]} ( rX)\| \cdot \| f^{ [ l ] } (r X ) \| \\
\leq & ( r^l \sum_{ v \in \cFml } | g_v | \cdot \| X^v \| ) \cdot (r^l \sum_{ w \in \cFml } | f_w | \cdot \| X^w \| ) \\
\leq & (m\cdot r)^{2l} ( \frac{1}{m^l} \sum_{ v \in cFml } | g_v | ) \cdot ( \frac{1}{m^l } \sum_{ w \in \cFml } | f_w | )  \\
\leq & (m\cdot r)^{2l}  \sup_{ v \in \cFml} | g_w | \cdot \sup_{ w \in \cFml} | f_w |
 \leq  (m\cdot r)^{2l} ( \sup_{ v \in \cFml} | g_w |^2
 + \sup_{ w \in \cFml} | f_w |^2 ) .
\end{align*}
 Theorem \ref{thm:3.3} implies then that if $ \Omega =   ( \mathbb{D}^m )_{ \text{nc} }  $ and $ r < \frac{1}{m} $, respectively if $ \Omega = (  \mathbb{B}^m )_{ \text{nc} } $ and 
 $ r < \frac{1}{m^2} $ the sum from the right-hand side of equation (\ref{eq:prop34}) is absolutely convergent, uniform in $ N $ and $ X $, hence 
\[
\int_{  (\partial_S \Omega)_N }
    \frac{1}{N} \Tr \left( g( rX)^\ast f( rX ) \right)
d\omega_N
=
\sum_{ l=0 }^\infty
\int_{  (\partial_S \Omega)_N } 
\frac{1}{N} \Tr \left( g^{[l]}(rX)^\ast f^{[l]}( rX ) \right)
d\omega_N,
\]
 with the right-hand side absolutely convergent uniform in $ N$.
  
   From equations (\ref{eq:022}) and (\ref{eq:032}), 
  \[
   \displaystyle \lim_{ N \rightarrow \infty }\int_{ \partial(\Omega, N ) } \frac{1}{N}\text{Tr} 
  \left(g^{ [ l ] } (  r X)^\ast f^{ [ l ] } ( r X )
  \right) d\omega_N 
  =
  \left\{
  \begin{array}{lc}
   \displaystyle r^{ 2 l}\sum_{  w \in \cFml  }\overline{ g_w}f_w &  
  \text{if  } f, g \in H^2( ( \mathbb{D}^m)_{\text{nc} } ) \\
   \displaystyle r^{2l}\sum_{  w \in \cFml } \frac{1}{ m^{ l  } } \overline{ g_w}f_w &  
  \text{if  } f, g \in H^2( ( \mathbb{B}^m)_{\text{nc} } ) \\
  \end{array}
  \right.
   \]
  and the conclusion follows from Theorem \ref{thm:3.3}.
  
\end{proof}

 An immediate consequence of the Proposition above is the following Theorem.

\begin{thm}\label{thm:innerprod}
${}$\newline
\emph{(i)} With the notations from Definition \ref{defn:omega1}, $ H^2(\Omega_{\text{nc} } ) $
are inner-product spaces, with the inner product given by
\[
 \langle f , g \rangle = 
 \lim_{ r \lra 1^- } \widetilde{\varphi_{f, g}}(r) =
\left\{
\begin{array}{lc}
 \displaystyle \sum_{ { w \in \cF_m } }\overline{ g_w}f_w &  
\text{if  } f, g \in H^2( ( \mathbb{D}^m)_{\text{nc} } ) \\
 \displaystyle \sum_{ { w \in \cF_m  } } \frac{1}{ m^{ l  } } \overline{ g_w}f_w &  
\text{if  } f, g \in H^2( ( \mathbb{B}^m)_{\text{nc} } ) \\
\end{array}
\right.
\]
\emph{(ii)} $\{ X^{ w } \}_{ w \in \cF_m } $  is a complete orthonormal system in
 $ H^2 ( (\mathbb{D}^m)_{\text{nc}} ) $ 
and, for all \\
 $ f \in H^2 ( (\mathbb{D}^m)_{\text{nc}} )$, 
we have that 
 $ f_w = \langle f, X^w \rangle $ and $ \displaystyle  f =\sum_{ w \in \cF_m } f_w X^w $
 in $ H^2 ( (\mathbb{D}^m)_{\text{nc}} ) $.

\emph{(iii)} $\{  m^{  \frac{ | w |  } { 2 } } X^w \}_{ w \in \cF_m } $
  is a complete orthonormal system
in $ H^2 ( ( \mathbb{B}^m )_{ \text{nc} } ) $ and, for all
 $ f \in H^2 ( ( \mathbb{B}^m )_{ \text{nc} } ) $,
  we have that
$ f_w = \langle f,  m^{ | w | } X^w \rangle $ and $ \displaystyle  f =\sum_{ w \in \cF_m } f_w X^w$
in $ H^2 ( ( \mathbb{B}^m )_{ \text{nc} } ) $.
\end{thm}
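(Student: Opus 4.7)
The plan is to apply Proposition \ref{prop:34} to identify $\langle f, g\rangle := \lim_{r\to 1^-}\widetilde{\varphi_{f,g}}(r)$ with the claimed series; existence of the limit is already granted there. Proposition \ref{prop:34} realizes $\widetilde{\varphi_{f,g}}$ as a power series $\sum_{l=0}^\infty r^{2l} a_l$, with $a_l = \sum_{|w|=l} \overline{g_w} f_w$ in the polydisc case and $a_l = m^{-l}\sum_{|w|=l}\overline{g_w} f_w$ in the ball case. I would first use Theorem \ref{thm:3.3} to place $(f_w)$ and $(g_w)$ in the appropriate weighted $\ell^2$-space, then apply Cauchy--Schwarz twice (inside each level $\cFml$, then across the levels $l$) to conclude $\sum_l |a_l| < \infty$, and finally invoke Abel's theorem to obtain $\lim_{r\to 1^-} \sum_l r^{2l} a_l = \sum_l a_l$. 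Sesquilinearity and conjugate-symmetry are immediate from the series; positive-definiteness comes from noting that $\langle f, f\rangle$ is a sum of non-negative terms, vanishing only when all $f_w = 0$, which forces $f \equiv 0$ by uniqueness of the Taylor--Taylor coefficients in (\ref{eq:nc2}).

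\textbf{Parts (ii) and (iii).} Next I use that the Taylor--Taylor coefficients of $X\mapsto X^v$ are $\delta_{v,w}$; substituting into the formula from (i) immediately yields orthonormality of $\{X^w\}$ in $H^2((\mathbb{D}^m)_{\text{nc}})$ and of $\{m^{|w|/2} X^w\}$ in $H^2((\mathbb{B}^m)_{\text{nc}})$, together with the Fourier-coefficient identities $\langle f, X^w\rangle = f_w$ and $\langle f, m^{|w|} X^w\rangle = f_w$. For the $H^2$-norm convergence of $\sum_w f_w X^w$ to $f$, I form the partial sums $S_L = \sum_{|w|\le L} f_w X^w$ (noncommutative polynomials, and hence in $H^2$) and compute $\langle f - S_L, f - S_L\rangle$ via (i), obtaining
\[
\|f - S_L\|^2 \;=\; \sum_{|w|>L} |f_w|^2 \quad\text{(polydisc)}, \qquad \|f-S_L\|^2 \;=\; \sum_{|w|>L} \tfrac{|f_w|^2}{m^{|w|}} \quad\text{(ball)},
\]
both of which tend to $0$ by Theorem \ref{thm:3.3}. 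Totality of the orthonormal system (the relevant notion of completeness, since the space itself is incomplete) then follows formally from this expansion: $\langle f, X^w\rangle = 0$ for all $w$ forces every $f_w$ to vanish, whence $f \equiv 0$.

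\textbf{Main obstacle.} The only subtle step is extending the series identity of Proposition \ref{prop:34} from an initial segment of $(0,1)$ to the boundary $r \to 1^-$; this is the Abelian passage, and it works precisely because Theorem \ref{thm:3.3} combined with Cauchy--Schwarz delivers absolute convergence of the series at $r = 1$. Everything else is routine bookkeeping with Taylor--Taylor coefficients.
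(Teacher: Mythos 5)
Your proposal is correct and follows essentially the same route as the paper: part (i) is obtained from Proposition \ref{prop:34} together with the convergence supplied by Theorem \ref{thm:3.3}, and parts (ii) and (iii) come from the coefficient formulas (equivalently Theorem \ref{thm:1.a} combined with equations (\ref{eq:022}) and (\ref{eq:032})). You merely spell out the Cauchy--Schwarz/Abel details and the partial-sum computation that the paper leaves implicit.
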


\begin{proof}
Part (i) follows from Proposition \ref{prop:34} and since the sums are convergent from Theorem \ref{thm:3.3}.

 The parts (ii) and (iii) are simple consequences of Theorem
  \ref{thm:1.a}  and equations (\ref{eq:022}) and (\ref{eq:032}).
\end{proof}

\begin{remark} The limit over $ N $ in Theorem \ref{thm:innerprod}(i) is not the supremum. For example, if $ m = 2 $ and
$ f(X) = X_1X_2 + X_2X_1 $, then 
\[
\int_{ \dDN} \frac{1}{N} \text{Tr} \left( f(rX)^\ast f(rX) \right) d\mu_N = 2r^2(1+\frac{1}{N^2})
\]
\end{remark}

\begin{defn}
As before, $ \Omega $ will denote either $ \mathbb{D}^m $ or $ \mathbb{B}^m $.

For $ X \in \Omega_{ \text{nc} } $, define the map 
$ E^X_ { \Omega } : H^2( \Omega_{ \text{nc} } ) \lra \mathbb{C}^{  N \times N } $
via
$
 E^X_\Omega ( f ) = f( X ) . $ 

\noindent Let 
$\mathcal{B}_{ \Omega, N } = \{ X \in  \Omega_{ \text{nc} } \cap \mathbb{C}^{ N \times N } : \ 
E^X_{\Omega} \text {is a bounded map} \}
$
and
$\displaystyle  \mathcal{B}_\Omega = \coprod_{ N =1}^\infty \mathcal{B}_{ \Omega, N } .$

\end{defn}

For $ p > 0 $, we define the Hilbert space
\[
l^2_p ( \cF_m ) = 
\{
\{ f_w \}_{ w \in \cF_m } : \ f_w \in \mathbb{C} , 
\| \{ f_w \}_{ w \in \cF_m } \|_{ 2, p } = \sum_{ l=0}^\infty  
( \sum_{ w \in \cFml } \frac{1}{p^l }\overline{f_w }f_w ) 
< \infty 
\}.
\]

\begin{prop}\label{prop:1}
With the above notations, we have that
 \begin{enumerate}
\item[(i)]$ \displaystyle \mathcal{B}_{ \mathbb{D}^m } = \{ X \in ( \mathbb{D}^m)_{ \text{nc} } :
 \text{ the series}  
\sum_{ l = 0 }^\infty ( \sum_{ w \in \cFml } f_w X^w )
 \text{ converges}
\text{ for any} \\ \text{sequence }
 \{ f_w \}_{ w \in \cF_m} \in l^2( \cF_m) 
   \}
$
\item[(ii)]
$ \displaystyle \mathcal{B}_{ \mathbb{B}^m } = \{ X \in ( \mathbb{B}^m)_{ \text{nc} } :
 \text{ the series} 
 \sum_{ l = 0 }^\infty( \sum_{ w \in \cFml } f_w X^w ) 
\text{ converges}
\text{ for } \\ \text{ any sequence }
 \{ f_w \}_{ w \in \cF_m} \in l_m^2( \cF_m) 
   \}
$
\end{enumerate}
\end{prop}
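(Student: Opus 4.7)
The plan is to identify $H^2(\Omega)$ isometrically with a dense subspace of a genuine Banach space of coefficient sequences, and then transfer between evaluation-boundedness and series-convergence using the Banach--Steinhaus theorem. The key observation is that by Theorem \ref{thm:innerprod}, the Taylor--Taylor coefficient map $\Phi : f \mapsto \{f_w\}_{w \in \cF_m}$ is an isometric embedding of $H^2((\mathbb{D}^m)_{\text{nc}})$ into $l^2(\cF_m)$ and of $H^2((\mathbb{B}^m)_{\text{nc}})$ into $l^2_m(\cF_m)$; setting $p = 1$ in the polydisc case and $p = m$ in the ball case, we have $\|f\|_{H^2(\Omega)}^2 = \|\Phi(f)\|_{2,p}^2$ throughout. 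The image of $\Phi$ contains all noncommutative polynomials, whose coefficient sequences are already dense in $l^2_p(\cF_m)$.

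For the $(\Longrightarrow)$ direction, given $X \in \mathcal{B}_{\Omega, N}$ and $\{f_w\} \in l^2_p(\cF_m)$, I consider the polynomial partial sums
\[
P_L(Y) = \sum_{l=0}^L \sum_{w \in \cFml} f_w Y^w,
\]
which lie in $H^2(\Omega)$ (they are polynomials, hence bounded on each $\partial_S(\Omega)_N$, so in $\cA_\Omega$ with $S(P_L) < \infty$) and form a Cauchy sequence in the inner product norm, since $\|P_{L'} - P_L\|_{H^2}^2 = \sum_{L < l \leq L'} p^{-l} \sum_{|w| = l} |f_w|^2 \to 0$. Boundedness of $E^X_\Omega$ then forces $\{P_L(X)\}$ to be Cauchy in the finite-dimensional space $\mathbb{C}^{N \times N}$, hence convergent, which is precisely the asserted convergence of the series $\sum_{l=0}^\infty \sum_{|w|=l} f_w X^w$.

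For the $(\Longleftarrow)$ direction, I define finite-rank linear operators $S_L : l^2_p(\cF_m) \to \mathbb{C}^{N \times N}$ by $S_L(\{f_w\}) = \sum_{l=0}^L \sum_{w \in \cFml} f_w X^w$. Each $S_L$ is bounded, and by hypothesis $\{S_L(\{f_w\})\}_L$ converges for every $\{f_w\} \in l^2_p(\cF_m)$. Since $l^2_p(\cF_m)$ is a genuine Hilbert space, the Banach--Steinhaus theorem yields $\sup_L \|S_L\| < \infty$, so the pointwise limit $S : l^2_p(\cF_m) \to \mathbb{C}^{N \times N}$ is bounded. Finally, for any $f \in H^2(\Omega)$ the Taylor--Taylor expansion \eqref{eq:nc2} gives $f(X) = \lim_L S_L(\Phi(f)) = S(\Phi(f))$, and therefore $\|E^X_\Omega(f)\| \leq \|S\|\cdot \|\Phi(f)\|_{2,p} = \|S\|\cdot \|f\|_{H^2(\Omega)}$, proving boundedness.

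I expect the $(\Longleftarrow)$ direction to be the main obstacle, since one cannot apply Banach--Steinhaus directly to $H^2(\Omega)$, which fails to be complete --- precisely the phenomenon emphasized in the introduction. Passing to the ambient Hilbert space $l^2_p(\cF_m)$, in which $H^2(\Omega)$ sits isometrically as a dense subspace, is the maneuver that circumvents this incompleteness.
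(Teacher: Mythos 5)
Your proposal is correct and follows essentially the same route as the paper: both directions hinge on applying Banach--Steinhaus to the truncation operators on the genuine Hilbert space $l^2_p(\cF_m)$ (into which $H^2(\Omega)$ embeds isometrically via the Taylor--Taylor coefficients), and on the fact that polynomial partial sums of an $l^2_p$ coefficient sequence are Cauchy in the $H^2$ norm, so that a bounded evaluation map forces convergence of the series. The only cosmetic difference is the order in which the two inclusions are presented.
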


\begin{proof}
 Suppose that  $ X \in \mathbb{D}^m \cap \mathbb{C}^{ N \times N }$  is such that 
$ \displaystyle \sum_{ l = 0}^\infty ( \sum_{  w \in \cFml } f_w X^w )  $
 converges for all $ \{ f_ w \} \in l^2 ( \cF_m ) $
and consider the linear map
$ \widetilde{\ED^X} : l^2( \cF_m ) \lra \mathbb{C}^{ N \times N } ,$
 given by
\[
\widetilde{\ED^X} ( \{ f_w \}_{ w \in \cF_m } ) = 
\sum_{ l = 0}^\infty ( \sum_{   w \in \cFml } f_w X^w ). 
\]

For every $  l $, define also 
\[
 \ED^{X, l} ( \{ f_w\} ) = \sum_{s=0 }^l (  \sum_{  w \in \cFml  }  f_w X^w  ).
\]
From the initial assertion, $ \widetilde{\ED^X} $ is the pointwise limit of  $ \{\ED^{X, l}\} _{l>0 }. $ 
 Each  $ \ED^{X, l} $ is a bounded linear operator from $ l^2( \cF_m ) $ to $ \mathbb{C}^{ N \times N } $, 
so Banach-Steinhaus Theorem gives that $ \widetilde{\ED^X} $ is bounded.

Take now $ f \in H^2 ( (\mathbb{D}^m)_{ \text{nc} } ) $. From Theorem \ref{thm:3.3},
 the sequence
 $\{ f_w\}_{ w\in \cF_m} $ of its Taylor-Taylor coefficients is in
 $ l^2( \cF_m ) $ and its norm
 coincides to the norm of $ f $ in  $ H^2 ( (\mathbb{D}^m)_{ \text{nc} } ) ,$ 
 hence the operator $ \ED^X $ is bounded and
 $ \| \ED^X \| \leq \| \widetilde{\ED^X} \| $.

 For the  converse,  fix
 $ \{ f_w \}_w \in l^2 (\cF_m ) $
 and, for all $ l > 0 $ consider the functions 
$ \alpha_l : ( ( \mathbb{D}^m)_{\text{nc} })_N \longrightarrow \mathbb{C}^{ N \times N } $ 
given by 
$ \displaystyle \alpha_l (X) = \sum_{ | w | \leq l } f_w X^w .$
 The sums are finite, therefore 
$ \alpha_l \in H^2( (\mathbb{D}^m)_{ \text{nc} } ) ,$
 hence, if
 $ X \in \mathbb{D}^m \cap \mathbb{C}^{ N \times N } $, then
\begin{align*}
\| \alpha_{ l + s }(X) - \alpha_{ l  }(X) \| &
 \leq \| \ED^X \| \cdot \| \alpha_{ l+ s} - \alpha_{ l } \|_{ H^2 ( ( \mathbb{D}^m)_{\text{nc} } ) }\\
& \leq \| \ED^X \| \cdot ( \sum_{ l < | w | \leq l+ s } | f_w|^2 ).
\end{align*}
Since the sequence 
$\{ \sum_{  | w | \leq l } | f_w|^2 \}_{ l \geq 0 } $ is Cauchy, it follows that $ \{ \alpha_l \}_l $
 is also a Cauchy sequence, 
therefore the series
 $ \displaystyle  \sum_{ l = 0 }^\infty ( \sum_{   w \in \cFml  } f_w X^w ) $ converges.

The argument for part (ii) is similar, replacing $ l^2(\cF_m ) $ to $ l^2_m ( \cF_m ) $ and using second parts of Theorem \ref{thm:3.3} and of  relation  (\ref{eq:coeff}).

\end{proof}

\begin{thm}\label{thm:3}
 For $ p > 0 $ , define 
\[ \displaystyle  \Upsilon^m_p = \{  X \in ( \mathbb{C}^m )_{ \text{nc} } : 
   \sum_{  w \in \cF_m  }   p^{ | w | } (X^w)^\ast X^w   
\text{converges} \}. \]
%\[ \displaystyle  \Upsilon^m_p = \{  X \in ( \mathbb{C}^m )_{ \text{nc} } : 
%\sum_{ l =0 }^\infty
%[   \sum_{ \substack{ w \in \cF_m \\ | w | = l } }   p^ l  (X^w)^\ast X^w   ]
%\text{converges} \}. \]
 Then 
$\mathcal{B}_{ \mathbb{D}^m } = ( \mathbb{D}^m )_\text{nc} \cap \Upsilon_1^m $
and 
$\mathcal{B}_{ \mathbb{B}^m } = ( \mathbb{B}^m )_\text{nc} \cap \Upsilon_m^m $.

Moreover, if $ X \in \Upsilon^m_p \cap \mathbb{C}^{N \times N} $, 
then 
 $ \{ X^w \}_{ w \in \cF_m } \in l^2_{\frac{1}{p }} ( \cF_m ) \otimes \mathbb{C}^{ N \times N } . $ 
\end{thm}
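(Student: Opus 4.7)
The plan is to combine Proposition~\ref{prop:1} with an isometric rescaling and a Banach--Steinhaus argument in finite dimensions. The ``moreover'' assertion is essentially a definitional restatement: the natural norm squared of $\{X^w\}_w$ in $l^2_{1/p}(\cF_m)\otimes\mathbb{C}^{N\times N}$ is $\sum_l p^l\sum_{|w|=l}(X^w)^\ast X^w$, which is precisely the series in the definition of $\Upsilon^m_p$. For the two set equalities, Proposition~\ref{prop:1} (applied with $p=1$ for $\mathbb{D}^m$ and $p=m$ for $\mathbb{B}^m$) reduces the problem to showing that $\sum_l\sum_{|w|=l}f_w X^w$ converges for every $\{f_w\}\in l^2_p(\cF_m)$ if and only if $X\in\Upsilon^m_p$. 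Via the isometric change of variable $g_w:=p^{-|w|/2}f_w$ identifying $l^2_p(\cF_m)$ with $l^2(\cF_m)$, it suffices to prove that, setting $\widetilde X^w:=p^{|w|/2}X^w$, the series $\sum_w g_w\widetilde X^w$ converges for every $\{g_w\}\in l^2(\cF_m)$ if and only if $\sum_w(\widetilde X^w)^\ast\widetilde X^w$ converges in $\mathbb{C}^{N\times N}$.

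For the easy direction (``$\Upsilon$ implies convergence''), I will assume $A:=\sum_w(\widetilde X^w)^\ast\widetilde X^w$ exists. Then for every $u\in\mathbb{C}^N$ one has $\sum_w\|\widetilde X^w u\|^2=u^\ast A u<\infty$, and a Cauchy--Schwarz estimate on tails,
\[
\Big\|\sum_{k<|w|\leq l}g_w\widetilde X^w u\Big\|^2\leq\Big(\sum_{k<|w|\leq l}|g_w|^2\Big)\Big(\sum_{k<|w|\leq l}\|\widetilde X^w u\|^2\Big),
\]
shows that $\sum g_w\widetilde X^w u$ is Cauchy in $\mathbb{C}^N$ for each $u$. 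Applied to the standard basis of $\mathbb{C}^N$, this gives column-wise, hence entry-wise, convergence of $\sum g_w\widetilde X^w$ in $\mathbb{C}^{N\times N}$.

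The substantive direction is the converse. Assuming $\sum g_w\widetilde X^w$ converges for every $\{g_w\}\in l^2(\cF_m)$, the partial-sum operators $T_l:l^2(\cF_m)\to\mathbb{C}^{N\times N}$ given by $T_l(\{g_w\})=\sum_{|w|\leq l}g_w\widetilde X^w$ are bounded (finite-dimensional target) and converge pointwise, so Banach--Steinhaus yields $\sup_l\|T_l\|<\infty$, and the limit operator $T$ is bounded. Equipping $\mathbb{C}^{N\times N}$ with the Frobenius norm and applying Riesz entry-by-entry, boundedness of $T$ is equivalent to $\{(\widetilde X^w)_{ij}\}_w\in l^2(\cF_m)$ for every $i,j$, i.e.\ $\sum_w\|\widetilde X^w\|_F^2=\sum_w\mathrm{Tr}\bigl((\widetilde X^w)^\ast\widetilde X^w\bigr)<\infty$. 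Since each summand $(\widetilde X^w)^\ast\widetilde X^w$ is positive and $\|B\|\leq\mathrm{Tr}(B)$ for $B\geq 0$, the series $\sum_w(\widetilde X^w)^\ast\widetilde X^w$ then converges absolutely in norm, yielding $X\in\Upsilon^m_p$.

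The main obstacle I expect is this converse direction: one must carefully justify the Banach--Steinhaus step on the weighted $l^2$ space, and then convert a bound on matrix entries back into convergence of a series of positive matrices via the trace. Once done, the two cases $p=1$ and $p=m$ give the claimed descriptions of $\mathcal{B}_{\mathbb{D}^m}$ and $\mathcal{B}_{\mathbb{B}^m}$ simultaneously, and the ``moreover'' follows immediately from the identification of norms in the first paragraph.
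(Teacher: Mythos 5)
Your proof is correct and follows essentially the same route as the paper: reduce via Proposition~\ref{prop:1} to the equivalence between convergence of $\sum_w f_w X^w$ for all $l^2_p$ coefficient sequences and square-summability of the entries of $\{X^w\}_w$, obtained through a uniform-boundedness/Riesz argument, and then pass between entrywise summability and convergence of the positive series $\sum_w p^{|w|}(X^w)^\ast X^w$. Your rescaling $g_w = p^{-|w|/2}f_w$ and the use of the trace in place of the paper's diagonal-entry computation are only cosmetic differences, and your explicit Banach--Steinhaus step merely spells out what the paper compresses into its appeal to the Riesz representation theorem.
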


\begin{proof}

 Suppose that $ X \in \mathcal{B}_{ \mathbb{D}^m, N }$.
 Since, according to  Proposition \ref{prop:1}(i),  
the series \\
$\displaystyle  \sum_{ l =0 }^\infty (  \sum_{ w \in \cFml  } f_w X^w ) $
 converges for any 
$ \{ f_w \}_{ w \in \cF_m }$ from  $  l^2 ( \cF_m ) ,$
 it follows that the series 
 $ \displaystyle \sum_{ l = 0 }^\infty ( \sum_{ w \in \cFml  } f_w e^\ast X^w \widetilde{e} )  $
also converges for any $ e, \widetilde{e} \in \mathbb{C}^N $. 
The Riesz Representation Theorem  gives that
 $ \{ e ^\ast X^w \widetilde{e} \}_{ w \in \cF_m } \in l^2 ( \cF_m ) $, therefore
also  the series 
$
\displaystyle
\sum_{ l =0}^\infty \sum_{ w \in \cFml } e^\ast (X^w)^\ast \widetilde{e}  X^w 
$
converges for all $ e, \widetilde{e} \in \mathbb{C}^N .$

Taking $ e, \widetilde{e} $ from the cannonical basis of $ \mathbb{C}^N $, we get that
 $  \sum_{ w \in \cF_m } (X^w)^\ast X^w $ converges on each entry, therefore in $ \mathbb{C}^{ N \times N } $.

The argument for $( \mathbb{B}^m)_\text{nc} $ is similar.

 Suppose now than $ X \in \Upsilon^m_p \cap \mathbb{C}^{N \times N} $.
Then
$  \displaystyle \sum_{ w \in \cF_m }p^{ | w | } (X^w)^\ast X^w  $  
also converges entrywise, and, since the $ (j, j) $-entry of the series equals
\[
\sum_{ w \in \cF_m }p^{ | w | }(  \sum_{ l=1}^N \overline{ x^{ ( w ) }_{ l, j } } x^{ ( w ) }_{ l, j } ) 
= 
\sum_{ l =1 }^N  (  \sum_{ w \in \cF_m } p^{ | w | }\overline{ x^{ ( w ) }_{ l, j } } x^{ ( w ) }_{ l, j } ) 
\]
where $ x^{ ( w )}_{ l, j} $ is the $ (l, j) $-entry of $ X^w  $,
 it follows that $ \{ x_{l, j}^{ ( w ) } \}_{ w \in \cF_m } \in l^2_{ \frac{1}{p} }( \cF_m ) $ for all $ l, j $. 
In particular $ \{ X^w \}_{ w \in \cF_m } \in l^2_{ \frac{1}{p}}( \cF_m ) \otimes \mathbb{C}^{ N \times N } $.

\end{proof}

\begin{remark}\label{remark:3.10}
\emph{(i)} $ \Upsilon_1^m \not\subset (\mathbb{D}^m)_\text{nc}$ 
and
 $ \Upsilon_m^m \not\subset (\mathbb{B}^m)_\text{nc}$.

\emph{(ii)} 
If $ X = (X_1, X_2, \dots, X_m ) \in ( \mathbb{C}^{ N \times N } )^m $ is such that 

\[ X^\ast X_1 + X_2^\ast X_2 + \dots + X_m^\ast X_m < \frac{1}{p} \]
then $ X \in  \Upsilon_p^m $.
 In particular, 
 $ \frac{1}{\sqrt{m}}  ( \mathbb{D}^m )_\text{nc} \subset \mathcal{B}_{\mathbb{D}^m}$
and
$ \frac{1}{\sqrt{m}}  ( \mathbb{B}^m )_\text{nc} \subset \mathcal{B}_{\mathbb{B}^m}$.

\end{remark}

\begin{proof}

 For part (i), it suffices to take $ X = (X_1, 0, \dots, 0) $ with $ X_1 $ nilpotent with norm larger than 1. Then 
 $ X \in \Upsilon_p^m $ for any $ p >0 $, but 
$ X \not\in (\mathbb{B}^m)_\text{nc}, (\mathbb{D}^m)_\text{nc} $.

For part (ii),  suppose that 
 $ X^\ast X_1 + X_2^\ast X_2 + \dots + X_m^\ast X_m < 
\frac{ \theta}{p} $ 
for some  $ 0< \theta < 1$.
Denote by $ \displaystyle X^{ [ l ] } =p^l \sum_{  w \in \cFml } (X^w )^\ast X^w .   $ 
Then
\[
0\leq X^{ [ l + 1 ] } = \sum_{  w \in \cFml  } p^l
 (X^w )^\ast \left(  p \sum_{ k =1}^m X_k^\ast X_k  \right) X^w   < \theta  X^{ [ l ] } \\
\]
hence $ \displaystyle \sum_{ w \in \cF_m } (X^w)^\ast X^w < \frac{ 1 }{1 - \theta  }. $

\end{proof}

\begin{defn}
For $ p > 0 $, we will consider the sets
\[
\cK_{ p}  = \{ ( X, Y ) \in ( \mathbb{C}^m)_{ \text{nc} } \times ( \mathbb{C}^m)_{ \text{nc} }  :\
\sum_{ l = 0}^\infty [
\sum_{  w \in \cFml   }p^{ l } X^w \otimes (Y^w )^\ast  ]  \ \text{converges} \}
\]
and the maps 
$ K_{ p}: \cK_{ p } \lra \mathbb{C}_{ \text{nc} }  $,
given by
\begin{align*}
%K_{ \infty } ( X,  Y ) = & \sum_{ w \in \cF_m } X^w \otimes (Y^w )^\ast \\
K_{ p } ( X,  Y ) = & \sum_{ l = 0}^\infty [
\sum_{  w \in \cFml  }p^{ l } X^w \otimes (Y^w )^\ast  ] .
\end{align*}
\end{defn}

Theorem \ref{thm:3} implies the following result:
\begin{remark}
 $ ( \Upsilon^m_p  \times \Upsilon^m_p ) \subset \cK_{ p} $.  \hfill $\square$
\end{remark}

Also, note that from the second part of Theorem \ref{thm:3}, 
any sequence $ f = \{ f_w \}_{ w \in \cF_m } $ from $ l^2_p (\cF_m) $
  can be identified with a nc-function on
$\Upsilon_p^m $  via 
\[
f( X ) = \sum_{ l=o}^\infty ( \sum_{ w \in \cFml  } f_w X^w ) .
\]

\noindent  
Next, we will consider  the following spaces of nc-functions:
\begin{defn}
For $ p > 0 $ define $ \overline{ H^2_{ m, p } }  $
as follows:
\begin{align*}
\overline{ H^2_{ m, p } }  & = \{ 
f:\Upsilon^m_{ p } \lra \mathbb{C}_{ \text{nc} } : \ f \ \text{is nc-function such that
there exists some sequence}\\
& \{ f_w \}_{ w \in \cF_m } \in l^2_{ p }( \cF_m ) \
\text{such that} \ 
 \displaystyle f(X) = \sum_{ l=o}^\infty \sum_{ w \in \cFml } f_w X^w \}. 
\end{align*}
\end{defn}
Note that $ \overline{ H^2_{ p } }  $ 
are Hilbert spaces with the inner-products inherited from $ l^2_{ p }(\cF_m ) $.
Proposition \ref{prop:11} below shows that in fact they are reproducing kernel Hilbert spaces 
with respect to $ K_{ p } $.

\begin{prop}\label{prop:11}
 Fix $ Y \in \Upsilon_{p}^m \cap \mathbb{C}^{ M \times M } $. With the 
notations above, we have that:
\begin{itemize}
\item[(i)] The map 
$ K_p ( \cdot, Y) :\Upsilon_{p}^m \lra ( \mathbb{C}^{M \times M})_{ \text{nc} }$
is a non-commutative function that belongs to
$ \overline{ H^2_{ p, m } } \otimes \mathbb{C}^{ M \times M } $.
\item[(ii)] for any $ e_1,  e_2  \in \mathbb{C}^M $ and any 
$ f \in \overline{ H^2_{ p , m } } $ , 
\[
\langle f, e_1^\ast K_{ p}( \cdot, Y )  e_2 \rangle_{ l^2_p( \cF_m ) }
= e_2^\ast f ( Y ) e_1.
\]
\end{itemize}

\end{prop}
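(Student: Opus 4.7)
The plan is to compute the Taylor--Taylor coefficients of $K_p(\,\cdot\,,Y)$ in the variable $X$ and to show that they satisfy the $l^2_p$ summability condition entrywise, then to verify the reproducing identity in (ii) by matching formal series. Once the coefficient identification is in hand, both parts reduce to a clean bookkeeping of powers of $p^{|w|}$ that cancel against the weights in the $l^2_p$ norm.

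First I would fix $Y \in \Upsilon_p^m \cap \mathbb{C}^{M\times M}$ and rewrite the defining series as
\[
K_p(X,Y) \;=\; \sum_{w\in\cF_m} X^w \otimes c_w, \qquad c_w := p^{|w|}(Y^w)^\ast \in \mathbb{C}^{M\times M}.
\]
For (i), I would argue entrywise: the scalar $(i,j)$-component is the nc-function $X \mapsto \sum_w (c_w)_{ij}\, X^w$, whose coefficient sequence has squared $l^2_p(\cF_m)$-norm
\[
\sum_{l=0}^\infty \sum_{|w|=l} \frac{|(c_w)_{ij}|^2}{p^l} \;=\; \sum_{l=0}^\infty \sum_{|w|=l} p^{l}\,|y^{(w)}_{ji}|^2,
\]
where $y^{(w)}_{ji}$ is the $(j,i)$-entry of $Y^w$. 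The right-hand side is finite by the second half of Theorem \ref{thm:3} applied to $Y$, which asserts exactly that $\{Y^w\} \in l^2_{1/p}(\cF_m)\otimes \mathbb{C}^{M \times M}$. Invoking the identification recorded just before the definition of $\overline{H^2_{m,p}}$, each entry then belongs to $\overline{H^2_{m,p}}$ and hence $K_p(\,\cdot\,,Y) \in \overline{H^2_{m,p}} \otimes \mathbb{C}^{M\times M}$; the nc-function property is inherited from that identification.

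For (ii), the plan is a direct computation. The Taylor--Taylor coefficient at $w$ of the scalar nc-function $e_1^\ast K_p(\,\cdot\,,Y)\, e_2$ is $h_w = p^{|w|}\, e_1^\ast (Y^w)^\ast e_2 = p^{|w|}\,\overline{e_2^\ast Y^w e_1}$, so pairing with the coefficients $\{f_w\}$ of $f$ in the $l^2_p$-inner product gives
\[
\sum_{w} \frac{\overline{h_w}\, f_w}{p^{|w|}} \;=\; \sum_{w} (e_2^\ast Y^w e_1)\, f_w \;=\; e_2^\ast\Bigl(\sum_w f_w Y^w\Bigr) e_1 \;=\; e_2^\ast f(Y)\, e_1,
\]
where the interchange of $e_2^\ast(\cdot)e_1$ with the sum is justified because $\sum_w f_w Y^w$ is absolutely convergent in $\mathbb{C}^{M\times M}$ (Cauchy--Schwarz against $\{f_w\}\in l^2_p(\cF_m)$ and $\{Y^w\}\in l^2_{1/p}(\cF_m)\otimes\mathbb{C}^{M\times M}$).

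The only real obstacle is confirming the square-summability in (i), and this reduces cleanly to Theorem \ref{thm:3} once the factors $p^{|w|}$ built into the definition of $K_p$ are matched against the weight $1/p^{|w|}$ in the $l^2_p$ norm. The identity in (ii) is then essentially forced: the precise cancellation of these weights is exactly what distinguishes $K_p$ as a \emph{reproducing} kernel, rather than an arbitrary rescaling of $\sum_w X^w\otimes(Y^w)^\ast$.
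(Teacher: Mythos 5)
Your proof is correct and follows essentially the same route as the paper: part (i) is reduced entrywise to the second half of Theorem \ref{thm:3}, and part (ii) is the direct $l^2_p$ pairing of coefficient sequences. If anything, your bookkeeping of the $p^{|w|}$ factors against the $1/p^{|w|}$ weight is more explicit than the paper's, which silently absorbs that cancellation.
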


\begin{proof}

For part (i), first note that for any  $ w \in \cF_m $, the map 
$ X\mapsto p^{ |w | }X^w \otimes (Y^w)^\ast $ 
is a noncommutative function from 
$ \mathbb{C}_{ \text{nc} } $ to $ ( \mathbb{C}^{ M \times M } )_{ \text{nc} } $, 
hence it suffices to prove the convergence in $ \overline{ H^2_{ p, m } } $.

Let $ y^{(w)}_{ i, j } $ be the $ (i, j) $-entry of $ Y^w $.

From Theorem \ref{thm:3}, the sequences $ \{ y^{ ( w )}_{ i, j } \}_{ w \in \cF_m } $ 
are in $ l^2_p( \cF_m ) $ 
for all $ i, j $, hence the map
$ X \mapsto \sum_{ w \in \cF_m } y^{ ( w ) }_{ i, j } X^w $
 is a $ \mathbb{C}_{ \text{nc} } $ -valued non-commutative function from 
$ \overline{ H_{ p, m }^2 } $.

For part (ii), let $ \{ f_w \}_{ w \in \cF_m } \in l^2 ( \cF_m ) $ such that
 $ \displaystyle f ( X ) = \sum_{ w \in \cF_m } f_w X^w $. Then
\begin{align*}
\langle f,  e_ 1 ^\ast K_{ p} ( \cdot, Y )  e_2 \rangle_{ l^2_p( \cF_m ) }
& =
\langle \{ f_w \}_{w\in \cF_m } ,
\{ e_1^\ast ( Y^w)^\ast  e_2 \}_{w \in \cF_m } \rangle_{ l^2_p( \cF_m) }\\
=&
\sum_{ w \in \cF_m } e_2^\ast f_w Y^w e_1 = e_2^\ast f( Y ) e_1.
\end{align*}
\end{proof}

\begin{prop}
 Suppose that $ f $ is a  non-commutative function locally bounded on slices separately in every matrix dimension around $ 0 $ and 
\begin{align*}
 \Phi ( r )
 &= 
\lim_{ N \lra \infty } \int_{ \dDN } \frac{ 1 } { N } \text{Tr} 
( f ( r X )^\ast f ( r X ) ) d \mu_N,\\
\Psi( r ) 
&=
\lim_{ N \lra \infty } \int_{ \dBN } \frac{ 1 } { N } \text{Tr} 
( f ( r X )^\ast f ( r X ) ) d \nu_N.
\end{align*}
 Then  $ f $ extends to a function in $ \overline { H ^2_{ 1, m } } $, 
respectively in  $ \overline { H ^2_{ m, m } } $, 
 if and only if $ \Phi ( r ) $, 
respectively $ \Psi ( r ) $,  exists for all small $ r $ 
$ ( $in which case $ \Phi $, respectively $ \Psi $,  are also analytic at $ 0 $ $)$ and it extends analytically to $ ( 0, 1 ) $ and continously to $ [ 0, 1 ]$ .

Moreover, 
$ \displaystyle \lim_{ r \lra 1^- } \Phi (r ) \geq \| f \|_{ \overline{ H^2_{1, m } } } $, 
respectively 
$ \displaystyle \lim_{ r \lra 1^- } \Psi (r ) \geq \| f \|_{ \overline{ H^2_{m, m } } } $.
\end{prop}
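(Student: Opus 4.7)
The plan is to identify, for small $r > 0$, both $\Phi(r)$ and $\Psi(r)$ explicitly as power series in $r^2$ with non-negative coefficients determined by the Taylor--Taylor coefficients $\{f_w\}$ of $f$, and then deduce both directions from Abel's theorem and the Vivanti--Pringsheim theorem for such series. Concretely, I aim to establish
\[
\Phi(r) = \sum_{w \in \cF_m} r^{2|w|} |f_w|^2, \qquad \Psi(r) = \sum_{w \in \cF_m} \frac{r^{2|w|}}{m^{|w|}} |f_w|^2
\]
for $r$ sufficiently small; analyticity of $\Phi$ and $\Psi$ at $0$ is then immediate.

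The identification proceeds in the spirit of the proofs of Proposition \ref{prop:34} and Theorem \ref{thm:3.3}. Since $f$ is locally bounded on slices at $0$, its Taylor--Taylor expansion $f(rX) = \sum_{l \geq 0} r^l f^{[l]}(X)$ converges absolutely and, for $r$ small enough, uniformly on $\dDN$, respectively $\dBN$. Expanding $f(rX)^* f(rX)$ and using the orthogonality of monomials of different degree, equations (\ref{eq:021}) and (\ref{eq:031}), to annihilate all cross-terms yields
\[
\int_{(\partial_S \Omega)_N} \frac{1}{N} \Tr\bigl( f(rX)^* f(rX) \bigr) d\omega_N = \sum_{l=0}^\infty r^{2l} \int \frac{1}{N} \Tr\bigl( f^{[l]}(X)^* f^{[l]}(X) \bigr) d\omega_N,
\]
and passing to the limit $N \to \infty$ term by term via (\ref{eq:022}) and (\ref{eq:032}) produces the claimed power series expressions.

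For the $(\Rightarrow)$ implication, if $f$ extends to $\overline{H^2_{1,m}}$, respectively $\overline{H^2_{m,m}}$, the weighted $\ell^2$-summability of $\{f_w\}$ (cf.\ Theorem \ref{thm:3.3}) gives via the Weierstrass $M$-test uniform convergence of the power series above on $[0,1]$, so the extension of $\Phi$, respectively $\Psi$, is analytic on $\{|r|<1\}$ and continuous on $[0,1]$. For the $(\Leftarrow)$ implication, assume $\Phi$ (resp.\ $\Psi$) extends analytically to $(0,1)$ and continuously to $[0,1]$. Since the coefficients of the identified power series are non-negative, Vivanti--Pringsheim forces the radius of convergence to be at least $1$, and continuity at $r = 1^-$ then forces $\sum_w |f_w|^2 < \infty$, respectively $\sum_w |f_w|^2 / m^{|w|} < \infty$, so that $f$ extends. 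The ``moreover'' inequality is an immediate corollary of Abel's theorem: $\lim_{r \to 1^-} \Phi(r) = \sum_w |f_w|^2 = \|f\|^2_{\overline{H^2_{1,m}}}$, and similarly for $\Psi$, with equality in fact holding rather than only the stated $\geq$.

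The main technical obstacle is the uniformity in $N$ required to justify the termwise interchange of $\lim_{N \to \infty}$ with $\sum_l$ in the identification step. Without a priori $\ell^2$-control on $\{f_w\}$, one needs to bound $\int \frac{1}{N} \Tr(f^{[l]}(X)^* f^{[l]}(X)) d\omega_N$ uniformly in $N$ by an $r^{2l}$-summable sequence. Since $\|rX\|$ equals $r$ uniformly in $N$ on both distinguished boundaries, the normal convergence of the Taylor--Taylor expansion on a compact circular neighborhood of $0$ should supply such a uniform bound for $r$ below a dimension-independent threshold, at which point dominated convergence completes the argument.
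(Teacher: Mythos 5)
Your proposal is correct and follows essentially the same route as the paper: identify $\Phi$ and $\Psi$ for small $r$ as power series in $r^2$ with non-negative coefficients $\sum_{|w|=l}|f_w|^2$ (resp.\ $m^{-l}\sum_{|w|=l}|f_w|^2$), deduce the forward direction from $\ell^2$-summability of the coefficients, the converse from the non-negativity of the coefficients plus analytic continuation to $(0,1)$ and continuity at $1$, and the final inequality from Abel summation. The technical obstacle you flag at the end is handled in the paper exactly as you suggest: the Taylor--Taylor expansion around $0$ yields some $\alpha>0$ with $\{(\alpha/m)^{|w|}f_w\}\in l^1(\cF_m)\subset l^2(\cF_m)$, a statement about the scalar coefficients only, which then dominates $\int \frac{1}{N}\mathrm{Tr}(f^{[l]}(rX)^\ast f^{[l]}(rX))\,d\omega_N$ uniformly in $N$ for $r$ below the dimension-independent threshold $\alpha/m$.
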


\begin{proof}

Suppose first that $ f $ extends to $ \widetilde{f} \in \overline{H^2_{ 1, m } } $, that is there exists
$ \{ f_w \}_{ w \in \cF_m } $ such that 
\begin{equation}\label{eq:l2}
f(X) = \sum_{ l = 0 }^\infty ( \sum_{ w \in \cFml } f_w X^w )
\end{equation}
for all $ X \in \Upsilon^m_1 $; in particular, Remark \ref{remark:3.10}(ii) gives that the expansion (\ref{eq:l2})
  holds for all
$ X \in \frac{1}{\sqrt{m} } \mathbb{D}^m .$
 
As before, consider the non-commutative functions 
 $ f^{[ l ] } : ( \mathbb{C}^m )_{ \text{nc} } \lra \mathbb{C}_{ \text{nc} } $
given by
$ f^{ [ l ] } (X) = \sum_{ w \in \cFml } f_w X^w  $. Then, for $ X \in \dDN $, we have that 
\[
\| f^{ [ l ] } ( \frac{1}{m } X  ) \| \leq 
\sum_{ w \in \cFml } \frac{1 }{ m^l } \sup_{ w \in \cFml }  \left(  | f_w | \cdot \| X^w \| \right) 
\leq \sup_{ w \in \cFml } | f_w | ,
\]
therefore, for $ r \in ( 0, \frac{1}{m } ) $,
\[
\int_{ \dDN } \frac{1}{ N }
 \Tr \left(   f^{ [ l ] } ( r X )^\ast f^{ [ l ] } ( r X )   \right) d\mu_N 
\leq \sup_{ w \in \cFml } | f_w |^2.
\]
hence, expansion (\ref{eq:l2}) and  Corollary \ref{cor:24}(ii) give that
\begin{align*}
\int_{ \dDN } \frac{ 1 } { N } \Tr \left( f ( rX ) ^\ast \right. & \left. f ( r X ) \right) d \mu_N \\
= \sum_{ l = 0 }^\infty
&
 \int_{ \dDN } \frac{1}{ N }
 \Tr \left(   f^{ [ l ] } ( r X )^\ast f^{ [ l ] } ( r X )   \right) d\mu_N 
\leq \| f \|^2_{ l^2 ( \cF_m ) }.
\end{align*}

Therefore, using Corollary \ref{cor:24}(i), we have that for $ r \in ( 0, \frac{1 }{ m } ) $,
\[
\Phi(r ) = \sum_{ l = 0 }^\infty r^{2 l } ( \sum_{ w \in \cFml } | f_w |^2 )
\]
and $ \Phi $ extends analytically to $ (0, 1 ) $ and continously to $ [ 0, 1 ] .$

The proof for $ \Psi $ is similar, using Remark \ref{remark:3.10} and Corollary \ref{cor:2}.

For the converse, suppose that there  exists $ \delta   > 0 $ such that $ \Phi ( r ) $ exists for $ r < \delta $
and extends analitically to $ ( 0 , 1 ) . $  
In particular there  exists some $ N_0 $  such that the integral from the definition of $ \Phi (\cdot ) $ 
is finite if $ N > N_0 . $  Fix now $ N > N_0 $; equation (\ref{eq:nc1}) gives that there exists some $ \alpha > 0 $
such that the series 
$ \sum_{ w \in \cF_m } f_w X^w $
converges absolutely for $ X \in \alpha \mathbb{D}^m , $ 
 particularly  
$ \{ ( \frac{ \alpha}{ m } )^{  | w | } f_w \}_{ w \in \cF_m  } \in l^1 ( \cF_m )\subset l^2 ( \cF_m ) .$

Let $ R = \min \{ \delta, \frac{\alpha}{m } \} .$ Then Corollary \ref{cor:24} gives  that, for $ r \in ( 0, R ) $,
\[
\Phi(r) = \sum_{ l =0 }^\infty r^{ 2l } ( \sum_{ w \in \cFml } | f_w |^2 )
\]
and the conclusion follows since $ \Phi ( \cdot ) $ extends analytically to $ ( 0, 1 ) $.

As before, the proof for $ \Psi ( \cdot ) $ is similar, using equation (\ref{eq:nc1}) and Corollary \ref{cor:2}.

\end{proof}

\begin{prop}\label{oldprop:3.15}
For $ f \in \overline{ H^2_{1, m } } $ and $ g \in \overline { H^2_{ m, m } } $,  respectively  $ Y \in \Upsilon^m_1 \cap \mathbb{C}^{ M \times M } $ and $ Y^\prime \in \Upsilon^m_m \cap \mathbb{C}^{ M \times M } $, we have that
\begin{align*}
\varphi_{ f, Y } ( r ) &  
= \lim_{ N \lra \infty } \int_{ \dDN } \frac{1 }{  N  } \Tr \otimes \text{Id}_{ \mathbb{C}^{ M \times M } }
\left(
f( r X ) K_1 ( r X, Y^\ast)^\ast 
\right)
d\mu_N(X)\\
\psi_{ g, Y^\prime } ( r ) &
=
\lim_{ N \lra \infty } \int_{ \dBN } \frac{ 1 }{ N } \Tr\otimes \text{Id}_{ \mathbb{C}^{ M \times M } } 
\left( 
g (r X ) K_m ( rX, ( Y^\prime )^\ast )^\ast 
\right)
d\nu_N(X)
\end{align*}
are analytic functions of $ r $ for $ r $ small and they extend analytically to $ ( 0, 1 ) $ and continously to $ [ 0, 1 ] $.
Moreover, 
$ \displaystyle \lim_{ r \lra 1^{ - } }  \varphi_{ f, Y } ( r ) = f( Y ) $ 
and
$ \displaystyle \lim_{ r \lra 1^{ - } } \psi_{ g, Y^\prime } ( r ) = g( Y^\prime ) .$
\end{prop}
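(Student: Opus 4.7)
The plan is to compute $\varphi_{f,Y}(r)$ in the polydisc case; the ball case $\psi_{g,Y'}(r)$ is handled identically after replacing $\dDN$, $\mu_N$, $l^2_1(\cF_m)$ and (\ref{eq:021})--(\ref{eq:022}) by $\dBN$, $\nu_N$, $l^2_m(\cF_m)$ and (\ref{eq:031})--(\ref{eq:032}). The overall strategy parallels the proof of Proposition~\ref{prop:34}: expand both factors under the integral into Taylor--Taylor homogeneous pieces, use the orthogonality of monomials to collapse the resulting double sum, and identify the limit via Proposition~\ref{prop:11}.

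First I would write $f(rX)=\sum_{l\ge 0}r^l f^{[l]}(X)$ and expand $K_1(rX,Y^\ast)^\ast$ as the corresponding word-indexed series in $r$, whose $l$-th homogeneous coefficient has the form $\sum_{|w|=l}(X^w)^\ast\otimes Z^w$, where $\{Z^w\}_{w\in\cF_m}$ is the $\mathbb{C}^{M\times M}$-valued coefficient sequence produced by $Y^\ast$. Theorem~\ref{thm:3.3} applied to $f\in\overline{H^2_{1,m}}$ and Theorem~\ref{thm:3} applied to $Y\in\Upsilon^m_1\cap\mathbb{C}^{M\times M}$ place $\{f_w\}$ and $\{Z^w\}$ in $l^2_1(\cF_m)$ (with the latter tensored by $\mathbb{C}^{M\times M}$). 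Since $\|X^v\|\le 1$ on $\dDN$, Cauchy--Schwarz produces a uniform-in-$(N,X)$ geometric bound on the $(l,p)$-term of the resulting double series, so the integral can be interchanged with both summations. Equation~(\ref{eq:021}) then annihilates the $l\ne p$ contributions for every $N$, while (\ref{eq:022}) collapses the matched-degree terms to $\delta_{v,w}$ in the $N\to\infty$ limit, yielding
\[
\varphi_{f,Y}(r)=\sum_{l\ge 0}r^{2l}\sum_{|w|=l}f_w\,Z^w,
\]
initially for $r$ small.

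A further Cauchy--Schwarz in $l^2_1(\cF_m)$ applied to $\{f_w\}$ and $\{Z^w\}$ shows that this series is absolutely and uniformly convergent on $[0,1]$; the analytic extension to $(0,1)$ and continuous extension to $[0,1]$ follow. Sending $r\to 1^-$ produces $\sum_{w\in\cF_m} f_w Z^w$, which is exactly the reproducing-kernel pairing computed in Proposition~\ref{prop:11} and therefore equals $f(Y)$. The main obstacle is the interchange of limits in the previous paragraph: the orthogonality relations (\ref{eq:021})--(\ref{eq:022}) hold only asymptotically in $N$, yet the tail of the $l$-sum must be dominated uniformly in $N$. My plan is to truncate at a large cutoff $L$, process the head with the orthogonality relations exactly as in Proposition~\ref{prop:34}, and dominate the tail uniformly in $N$ by the product of $l^2_1$-norms together with the operator-norm bound $\|X^v\|\le 1$ on $\dDN$.
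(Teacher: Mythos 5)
Your argument is correct and follows essentially the same route as the paper's proof: expand $f(rX)$ and the kernel into homogeneous word-indexed pieces, obtain an absolute bound uniform in $N$ and in $X\in\dDN$ (using $\|X^w\|\le 1$ and Cauchy--Schwarz over the $m^l$ words of each length) that justifies interchanging the integral with the double sum for small $r$, invoke the orthogonality relations (\ref{eq:021})--(\ref{eq:022}) (resp.\ (\ref{eq:031})--(\ref{eq:032})) to collapse to $\sum_l r^{2l}\sum_{|w|=l}f_wZ^w$, and identify the $r\to 1^-$ limit via Proposition \ref{prop:11}. The only cosmetic slip is citing Theorem \ref{thm:3.3} for $\{f_w\}\in l^2_1(\cF_m)$, which here holds directly by the definition of $\overline{H^2_{1,m}}$ (Theorem \ref{thm:3.3} concerns $H^2(\Omega)$); Theorem \ref{thm:3} is the right reference for $\{Y^w\}$, exactly as the paper uses it.
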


\begin{proof}
Let $ p \geq 1 $ and $ r \in ( 0, ( 2 m^2 p )^{ - 1 } ) ,$
let $ \{ f_w \}_{ w \in \cF_m } \in l^2_p ( \cF_m ) ,$
and consider
$ X \in \mathbb{C}^{ N \times N } $, $ Y \in \mathbb{C}^{ M \times M } $
 such that 
$\displaystyle \sup_{ w \in \cF_m } \|  Z^w \| = m(Z) < \infty $ 
and 
$ \displaystyle \sup_{ w \in \cF_m } \| X^w \| \leq 1 $.

First, note that the series 
$ \displaystyle \sum_{ l = 0 }^\infty ( \sum_{ w \in \cFml } r^{ 2 | w | } f_w p^{ | w | } Y^w ) $
converges asolutely, since
$ \displaystyle \| \{ ( rp)^{ 2 | w | } \}_{ w  } \|_{ l^2 ( \cF_m ) } < 2 $ 
and
\begin{align*}
\sum_{ l = 0 }^\infty 
( \sum_{ w \in \cFml } 
\| r^{ 2 | w | } f_w p^{ | w | } Y^w \| 
)
&\leq
m( Y ) \cdot \sum_{ l = 0 }^\infty 
( \sum_{ w \in \cFml } 
 ( r p )^{ 2 | w | } \cdot | p^{ - | w | } f_ w | \\
\leq
&
m ( Y ) \cdot \| \{ f_w \|_{ w  } \|^{ \frac{1}{2} }_{ l^2_p ( \cF_m ) } \cdot 
\| \{ ( rp)^{ 2 | w | } \}_{ w  } \|_{ l^2 ( \cF_m ) }^{ \frac{ 1 }{ 2 } } 
\end{align*}

Also, we have that
$ \displaystyle \| \{ ( r m p )^{ | w | } \}_{ w } \|_{ l^2( \cF_m ) } < 2 $
and
\begin{align*}
[
\sum_{ k = 0 }^\infty
(
\sum_{ w \in \cF_m^{ [ k ] } }
&
\| f_w \cdot r^{ | w | } X^w \| 
) ]
\cdot 
[
\sum_{ l = 0 }^\infty
(
\sum_{ v \in \cFml }
p^l \cdot r^l \| X^v \otimes (Y^v)^\ast \|
) ]\\
& \leq
[
\sum_{ k = 0 }^\infty 
m^k r^k 
(
\sum_{ w \in \cF_m^{ [ k ] } } | f_w | 
)]
\cdot
(
\sum_{ l = 0 }^\infty
p^l r^l m^l
\cdot m( Y )
)\\
&
\leq
[
\sum_{ k = 0 }^\infty
( r\cdot mp )^k 
\cdot
(
\sum_{ w \in \cF_m^{ [ k ] } }
 p^{ - | w | } | f_w | 
)]
\cdot 2 m(Y)\\
&
\leq
\| \{ f_w \}_{ w } \|^{ \frac{ 1 }{ 2 } }_{ l^2_p ( \cF_m ) }
\cdot
\| \{ ( r m p )^{ | w | } \}_{ w } \|^{ \frac{ 1 }{ 2 } }_{ l^2 ( \cF_m ) } \cdot 2 m(  Y ).
\end{align*}

Therefore, for $ p = 1 ,$  if $ f $ and $ Y $ are as in the statement of \ref{oldprop:3.15}, we have that
\begin{align*}
 &\int_{ \dDN } \frac{1 }{  N  } \Tr \otimes \text{Id}_{ \mathbb{C}^{ M \times M } }
\left(
f( r X ) K_1 ( r X, Y^\ast)^\ast 
\right)
d\mu_N(X)\\
 &\hspace{ 0.5 cm } = 
\int_{ \dDN } \frac{1 }{  N  } \Tr \otimes \text{Id}_{ \mathbb{C}^{ M \times M } }
\LARGE(
\sum_{ k, l = 0 }^\infty \sum_{ w \in \cF_m^{ [ k ] } } \sum_{ v \in \cFml }
f_w \cdot r^k X^w \cdot (X^v )^\ast \otimes Y^v 
\LARGE)
d\mu_N(X).
\end{align*}
Since $ \frac{1}{N}\Tr\otimes \text{Id}_{ \mathbb{C}^{M \times M } } $ is a bounded linear map, using 
Corollary \ref{cor:24}, the right hand side of the equation above equals
$ \displaystyle
\sum_{ l = 0 }^\infty ( \sum_{ w \in \cFml } f_w Y^w ). $
and the conclusion for $ \varphi_{ Y, f } $ follows.

The proof for $ \psi_{ g, Y^\prime } $ is analogous letting $ p = m $ and using Corollary \ref{cor:2}.

\end{proof}
%%%%%%%%%%%%%%%%%%%%%%%%%%%%%%%%%%%
\begin{defn}
For $ \Omega $ either $ \mathbb{B}^m $ or $ \mathbb{D}^m $, define
\begin{align*}
H^\infty (\Omega_{\text{nc}}  )
&=
\{ f : \Omega_{\text{nc}}  \lra \mathbb{C}_{\text{nc}} :  f \text{is a non-commutative function} 
 \text{ and }
\sup_{ Z \in \Omega }\| f ( Z ) \| < \infty 
\}\\
H^\infty
( \mathcal{B}_{\Omega } )
& = 
\{ f : \mathcal{B}_\Omega\lra \mathbb{C}_{\text{nc}} :  f \text{is a non-commutative function} 
 \text{ and }
\sup_{ Z \in \mathcal{B}_\Omega }\| f ( Z ) \| < \infty 
\}.
\end{align*}
\end{defn}

Obviously,  $ H^\infty (\Omega_{\text{nc}}  ) \subset H^\infty ( \mathcal{B}_{\Omega } ) $, 
since
$ \mathcal{B}_\Omega \subset \Omega_{\text{nc}}  $. We will further detail this inclusion below.
\begin{defn}
For $ \Omega $ either $ \mathbb{B}^m $ or $ \mathbb{D}^m $, define
\begin{align*}
\mathcal{M} ( \Omega_{\text{nc}}  )
& =
\{ f : \Omega_{\text{nc}}  \lra \mathbb{C}_{\text{nc}} : \   f \text{ is a non-commutative function which is also a  }\\  
& \text{bounded left multiplier for } H^2( \Omega_{\text{nc}} ) \}\\
\mathcal{M} ( \mathcal{B}_ \Omega  )
& =
\{ f : \mathcal{B}_ \Omega   \lra \mathbb{C}_{\text{nc}} : \   f \text{ is a non-commutative function which is also a  }\\  
& \text{bounded left multiplier for }  \overline{ H^2_{1, m } }  ,
 \text{if }   \Omega = \mathbb{B}^m ,   \text{ respectively }   \overline { H^2_{ m, m } }  
\text{ if }  \Omega = \mathbb{D}^m \}.
\end{align*}
where the multiplier norms are the natural ones.
\end{defn}

\begin{prop}
With the notations above, we have that
\[
H^\infty ( \Omega_{\text{nc} } ) \subseteq \mathcal{M} ( \Omega_{\text{nc} } ) 
\subseteq \mathcal{M} ( \mathcal{B}_\Omega ) \subseteq H^\infty ( \mathcal{B}_\Omega ) .
\]
\end{prop}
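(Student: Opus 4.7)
The plan is to address the chain of inclusions $H^\infty(\Omega_{\text{nc}}) \subseteq \mathcal{M}(\Omega_{\text{nc}}) \subseteq \mathcal{M}(\mathcal{B}_\Omega) \subseteq H^\infty(\mathcal{B}_\Omega)$ separately, using Theorems \ref{thm:3.3} and \ref{thm:innerprod} to transfer structure between $H^2(\Omega_{\text{nc}})$ and the completion $\overline{H^2_{p,m}}$, and Proposition \ref{prop:11} for the reproducing-kernel argument.

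For $H^\infty(\Omega_{\text{nc}}) \subseteq \mathcal{M}(\Omega_{\text{nc}})$, take $f \in H^\infty(\Omega_{\text{nc}})$ with $M = \sup_{Z \in \Omega_{\text{nc}}} \|f(Z)\|$ and $g \in H^2(\Omega_{\text{nc}})$. The product $fg$ is a non-commutative function on $\Omega_{\text{nc}}$, locally bounded on slices. For $X \in (\partial_S \Omega)_N$ and $r \in (0,1)$, $rX \in \Omega_{\text{nc}}$, so the operator inequality $f(rX)^* f(rX) \leq M^2 I_N$ gives
\[
\tfrac{1}{N}\Tr\bigl((fg)(rX)^*(fg)(rX)\bigr) \leq M^2 \cdot \tfrac{1}{N}\Tr\bigl(g(rX)^* g(rX)\bigr).
\]
Integrating over $(\partial_S\Omega)_N$ and taking suprema yields $S(fg) \leq M^2 S(g)$, so $fg \in H^2(\Omega_{\text{nc}})$ with multiplier norm at most $M$.

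For $\mathcal{M}(\Omega_{\text{nc}}) \subseteq \mathcal{M}(\mathcal{B}_\Omega)$, the idea is to pass to Taylor--Taylor coefficients. For the appropriate weight $p$ (namely $p = 1$ or $p = m$, matching Theorem \ref{thm:3.3}), Theorem \ref{thm:innerprod} identifies $H^2(\Omega_{\text{nc}})$ isometrically with $l^2_p(\cF_m)$ via $f \mapsto \{f_w\}$, and by definition $\overline{H^2_{p,m}} \cong l^2_p(\cF_m)$ as well. Under either identification, left multiplication by $f$ is realized by the same convolution $\{g_w\} \mapsto \{\sum_{uv=w} f_u g_v\}$ on coefficients, so boundedness transfers with the same operator norm. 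Theorem \ref{thm:3} gives $\mathcal{B}_\Omega \subseteq \Omega_{\text{nc}}$, and Proposition \ref{prop:1} combined with Theorem \ref{thm:innerprod} ensures that on $\mathcal{B}_\Omega$ pointwise multiplication $fg$ coincides with the abstract convolution operator; hence $f|_{\mathcal{B}_\Omega}$ lies in $\mathcal{M}(\mathcal{B}_\Omega)$ with the same multiplier norm.

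For the third inclusion, $\mathcal{M}(\mathcal{B}_\Omega) \subseteq H^\infty(\mathcal{B}_\Omega)$, I would invoke the reproducing-kernel structure of $\overline{H^2_{p,m}}$ from Proposition \ref{prop:11}. For $f \in \mathcal{M}(\mathcal{B}_\Omega)$ with multiplier norm $C$, a fixed $Y \in \mathcal{B}_\Omega \cap \mathbb{C}^{M \times M}$, and $e_1, e_2 \in \mathbb{C}^M$, Cauchy--Schwarz combined with the reproducing identity gives
\[
|e_2^* f(Y) h(Y) e_1| = \bigl|\langle M_f h, e_1^* K_p(\cdot, Y) e_2\rangle\bigr| \leq C \|h\|\, \|e_1^* K_p(\cdot, Y) e_2\|
\]
for every $h \in \overline{H^2_{p,m}}$. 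The plan is to invoke the standard RKHS multiplier characterization --- boundedness with norm $C$ being equivalent to positivity of the kernel $C^2 K_p(X,Y) - f(X) K_p(X,Y) f(Y)^*$ --- specialized at $X = Y$, and then exploit positive-definiteness of $K_p(Y,Y)$ (whose $w = \emptyset$ term is the identity) to extract the operator bound $\|f(Y)\| \leq C$; taking the supremum over $Y$ then places $f$ in $H^\infty(\mathcal{B}_\Omega)$. The hard part will be precisely this last step: transferring the scalar RKHS multiplier machinery to the matrix-valued, non-commutative setting --- in particular deducing the pointwise operator estimate from the kernel inequality requires a careful analysis of $K_p(Y,Y)$ as an operator on $\mathbb{C}^{M \times M} \otimes \mathbb{C}^{M \times M}$, for which the framework of \cite{bmv} should supply the cleanest tools.
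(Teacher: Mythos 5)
Your treatment of the first two inclusions is more detailed than the paper's, which dismisses both with ``from the consideration above,'' and it is essentially sound: the operator inequality $(fg)(rX)^*(fg)(rX) \le M^2\, g(rX)^*g(rX)$ does give $fg\in H^2(\Omega_{\text{nc}})$, and the coefficient/convolution picture together with density of the polynomials in $l^2_p(\cF_m)$ does transfer boundedness to the completion. One repair is needed in the first step: the $H^2$ norm is the iterated limit $\lim_{r\to1^-}\lim_{N\to\infty}$ of the trace integral, \emph{not} the supremum $S(\cdot)$ (the remark after Theorem \ref{thm:innerprod} shows these genuinely differ), so $S(fg)\le M^2S(g)$ gives membership in $H^2$ but not the multiplier norm bound; you should instead push the same pointwise inequality through the iterated limit to get $\|fg\|_{H^2}\le M\|g\|_{H^2}$.

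The genuine gap is in the third inclusion, which is the only one the paper proves explicitly. The mechanism you sketch --- positivity of $C^2K_p(X,Y)-f(X)K_p(X,Y)f(Y)^*$ specialized at $X=Y$, followed by exploiting $K_p(Y,Y)\ge I$ --- does not yield $\|f(Y)\|\le C$: from $f(Y)K_p(Y,Y)f(Y)^*\le C^2K_p(Y,Y)$ and $K_p(Y,Y)\ge I$ one obtains only $f(Y)f(Y)^*\le C^2K_p(Y,Y)$, and $K_p(Y,Y)$ admits no upper bound by a multiple of the identity uniformly over $\mathcal{B}_\Omega$ (it blows up toward the boundary of $\Upsilon^m_p$), so no uniform estimate comes out of this route. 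The paper argues differently: using Proposition \ref{prop:11}(ii) it computes the adjoint of the multiplier directly on the kernel functions, obtaining $(M_f)^*\bigl(e_1^*K_p(\cdot,Y)e_2\bigr)=e_1^*K_p(\cdot,Y)\,f(Y)^*e_2$ --- that is, the span of the kernel functions at $Y$ is invariant under $(M_f)^*$, which acts there through $f(Y)^*$ --- and then extracts $\|f(Y)\|\le\|M_f\|$ from $\|(M_f)^*\|=\|M_f\|$ together with the reproducing property. This is the classical $M_g^*k_\lambda=\overline{g(\lambda)}\,k_\lambda$ argument adapted to the matrix-valued kernel, and it is what you should carry out in place of the positivity criterion, rather than deferring the whole step to the machinery of \cite{bmv}.
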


\begin{proof}
From the consideration above, we only need to prove the last inclusion. 
Consider $ g \in \mathcal{M} ( \mathcal{B}_\Omega ) $, denote $ M_g $ the left multiplier with  $ g $ 
 and take 
$ X \in  \mathcal{B}_\Omega \cap \mathbb{C}^{ M \times M }$, 
$  Y \in \Upsilon_{p}^m \cap \mathbb{C}^{ N \times N }$ .
From Poposition \ref{prop:11}, for any $ e_1, e_2 \in \mathbb{C}^M $ and 
$ f_1, f_2 \in \mathbb{C}^N $, we have that
\begin{align*}
\langle (M_g)^\ast e_1^\ast K( \cdot, X ) e_2 , &
f_1^\ast K( \cdot,  Y ) f_2 \rangle 
= 
\langle 
 e_1^\ast K( \cdot, X ) e_2 ,
M_g f_1^\ast K( \cdot,  Y ) f_2
\rangle \\
=&
\langle
g(\cdot) f_1^\ast K( \cdot,  Y ) f_2 , 
e_1^\ast K( \cdot, X ) e_2
\rangle^\ast\\
=&
(e_2^\ast g(X) f_1^\ast K( X,  Y ) f_2 e_1)^\ast,
\end{align*}
hence
$ (M_g)^\ast K( \cdot, X ) = K ( \cdot, X ) g(X)^\ast $ 
and since $ \| (M_g)^\ast K( \cdot, X ) \| \leq \| M_g \| \cdot \| K( \cdot, X )  \| $ and 
$ K ( \cdot, \cdot ) $ is a reproducing kernel, it follows that 
$ \| g(X) \| \leq \| M_g \| $.
\end{proof}

%%%%%%%%%%%%%%%%%%%%%%%%%%%%%%%%%%%%%%%%%%%%%%%
%%%%%%%%%%%%%%%%%%%%%%%%%%%%%%%%%%%%%%%%%%%%%%%

\bibliographystyle{alpha}

%%%%%%%%%%%%%%%%%%%%%%%%%%%%%%%%%%%%%%%%%%%%%%%
%%%%%%%%%%%%%%%%%%%%%%%%%%%%%%%%%%%%%%%%%%%%%%%

\end{document}